\documentclass [12pt]{article}
\usepackage[cp1251]{inputenc}
\usepackage{amsmath}
\usepackage{amssymb}
\usepackage{mathrsfs}
\usepackage{amsfonts, dsfont, srcltx}


\newtheorem{lemma}{Lemma}
\newtheorem{theorem}{Theorem}
\newtheorem{corollary}{Corollary}
\newtheorem{proposition}{Proposition}

\newtheorem{remark}{Remark}
\newcommand{\qed}{\vrule width 5pt height 5pt depth 0pt}

\newenvironment{proof}{\vspace{5pt}\bf Proof.
\rm}{\hfill\qed\vspace{5pt}}

\numberwithin{equation}{section}


\begin{document}

\thispagestyle{empty}

\begin{center}
{\Large Spectral synthesis for the differentiation operator in the Schwartz space}
\end{center}
\bigskip

\begin{center}{N.F. Abuzyarova\footnote{This work was supported by the Ministry of
 Education and Science of Russian Federation, project
no. 01201456408.}}
\end{center}


\begin{abstract} We consider the spectral synthesis problem for the differentiation operator $D=\frac{\text{d}\,}{\text{d}\, t}$ in the Schwartz space $\mathcal E(a;b)=C^{\infty} (a;b)$
and the dual problem of local description for closed submodules in a special module of entire functions.
\end{abstract}

\section{Introduction}

Let $(a;b)$ be a finite or infinite open interval of the real line and let $$[a_1;b_1]\Subset [a_2;b_2]\Subset \dots ,$$ 
be a sequence of segments exhausting this interval. We consider the Schwartz space $\mathcal E(a;b) =C^{\infty} (a;b)$, equipped by the metrizible topology of  projective limit of the Banach spaces  $C^k [a_k;b_k]$. It is known that  $\mathcal E(a;b)$ is a reflexive Fr\'echet space. Unless otherwise specified, we denote by $W$ a closed non-trivial subspace of $\mathcal E (a;b)$ which is invariant under the differentiation operator
 Shortly, we say such a  subspace $W$ to be $D$-{\it invariant}.

Let   $\mathrm{ Exp}\,   W$ denote the set of all exponential monomials $t^je^{-{\text i} \lambda t}$, that is, the set of all root elements  the  operator $D$ contained in $W$.
As we will see later, the set $\mathrm{ Exp}\,   W$ is at most countable.

A generalization of L. Euler
fundamental principle for finite order differential
equations with constant coefficients \cite{Euler}, as well as a generalization of classical results  on mean periodic functions due to L.Schwartz \cite{Schw-1},
is {\it the spectral synthesis} for a $D$-invariant subspace $W$:
\begin{equation}
W=\overline {{\mathcal L} (\mathrm{Exp}\, W)}, \quad\text{where} \quad {\mathcal L} (\ \cdot\ )  \quad\text{denotes the linear span of the set}
\quad (\ \cdot\ ).
\label{f2}
\end{equation}
 
A. Aleman and B. Korenblum notice in  \cite{Al-Kor} that there exist non-trivial $D$-invariant subspaces $W$ of $\mathcal E(a;b)$ with the property
$\text{Exp}\, W=\emptyset .$
Namely, let $I\subset (a;b)$ be a non-empty relatively closed interval and set
\begin{equation}
W_I=\{ f\in\mathcal E:\ \ f^{(k)}(t)=0,\ t\in I, \ k=0,1, 2, \dots\}.
\label{f1}
\end{equation}
 Obviously, the subspace $W_I$ is $D$-invariant and the set $\text{Exp}\, W_I$ is empty. In what follows, $W_I$ doesn't admit the  spectral synthesis (\ref{f2}).

For a $D$-invariant subspace $W ,$ we denote by $I_W$ a  relatively closed in  $(a;b)$ interval which is minimal   among all intervals $I$ with the property  $W_{I}\subset W.$ 
 The existence of $I_W$ follows from  \cite[Theorem 4.1]{Al-Kor}.
 
 In  \cite{Al-Kor}, its authors propose a weaker, than   (\ref{f2}), version of spectral synthesis for the differentiation operator $D$ in the space $\mathcal E(a;b).$ 
  We call it  {\it  spectral synthesis in the weak sense} (or, {\it  weak spectral synthesis}). This version takes into account the presence of  the {\it residual subspace} 
	$W_{I_W}\subset W.$
	$D$-invariant subspace $W$ is said to be 
 {\it admitting spectral synthesis in the weak sense} (or, {\it admitting weak spectral synthesis}) if
 \begin{equation}
W=\overline{W_{I_W}+{\mathcal L}(\mathrm{Exp}\,   W)}. 
\label{f3}
\end{equation}

The authors of the paper \cite{Al-Kor} prove that the {\it spectrum} $\sigma (W)$ of $D$-invariant subspace $W,$ which is defined to be the spectrum of the restriction
of the operator $D$ to  $W$, is either discrete or equal to the whole complex plane (\cite[Theorem 2.1]{Al-Kor}).  
In the first case, the spectrum $\sigma (W)$ is a  sequence of multiple points
\begin{equation}
\sigma (W) =\{ (-{\text i} \lambda_j, m_j)\ : \ \ \lambda_j\in\mathbb C,\ \  m_j\in \mathbb N, \ j=1,2,\dots\},
\label{specW}
\end{equation}
and    $\mathrm{Exp}\,   W =\{ t^ke^{-{\text i} \lambda_j t} \ : \ \ k=0,1,\dots , m_j-1, \ j\in\mathbb N \}.$ 
 
If the subspace $W$  has the form (\ref{f3}),
its spectrum is discrete and equals $(-\text{i}\,\Lambda)$, where
$\Lambda=\{ ( \lambda_j, m_j)\}.$ 
Such a subspace admits spectral synthesis (\ref{f2}) under the additional assumption $I_W=(a;b).$

 A. Aleman and B. Korenblum conjecture in  \cite[Section 6]{Al-Kor}
that any $D$-invariant subspace $W\subset \mathcal E(a;b) $ with discrete spectrum
has the form (\ref{f3}). 
They prove this conjecture for the case of
a subspace with finite (in particular, empty) spectrum \cite[Propositions 4.1, 6.1]{Al-Kor}

In \cite{ABB} this conjecture is studied by developing and improving methods of \cite{Al-Kor}  and  reducing the weak spectral synthesis problem to the  problem of completeness for  mixed systems in a Hilbert space.

To formulate the main result of the paper   \cite{ABB} 
we recall that {\it the completeness radius } $\rho_{\Lambda}$ of a sequence $\Lambda =\{ (\lambda_j, m_j)\}$ is defined as
 the infimum of the radii of (open) intervals $ I\subset\mathbb R $ for
which the system of exponential monomials $\{ t^ke^{-\text{i}\,\lambda_jt},\ \ k=0,\dots , m_j-1,\ \ j=1,2,\dots\} $ 
is not complete in the
spaces ${\mathcal E} (I),$ $C(I)$, $L^p(I),$ $1\le p<\infty$  (see \cite{BM}).

  Theorem 1.1 \cite{ABB} positively answers the question on the weak spectral synthesis for $D$-invariant subspace
 $W\subset \mathcal E(a;b)$,  with discrete spectrum of the form (\ref{specW}) if $2\rho_{\Lambda}<|I_W|$,
where $\Lambda =\text{i}\, \sigma (W)$ 
 and $|I_W|$ is the length of the interval $I_W$. It turns out that the relation (\ref{f3}) may  fail for 
$D$-invariant subspaces $W$ with discrete spectrum $\sigma (W)=-\text{i}\,\Lambda$ if  $2\rho_{\Lambda}=|I_W|.$ 
First example of such a subspace is constructed in\cite[Theorem 1.2]{ABB}. A lot of  examples, including the mentioned one, arise from  \cite[Theorem 3]{NF-UMJ-2}.
 In the other hand,  as our example in Section 3 below shows, $D$-invariant subspace $W$ with discrete spectrum $\sigma (W)=-\text{i}\,\Lambda$ satisfyng
the relation $2\rho_{\Lambda}=|I_W|$, may also be of the form (\ref{f3}).

We announce in \cite{NF-DAN} the  necessary and sufficient condition for $D$-invariant subspace $W\subset \mathcal E(a;b)$ to be admitting weak spectral
 synthesis.
 Our approach to the problem does  	completely differ from the methods used in the works \cite{Al-Kor}, \cite{ABB}. Namely, we apply
		a dual scheme going back to I.F Krasichkov-Tenovskii \cite{IF-synth}  and L. Ehrenpreis \cite{Ehren-synth}. 
This scheme reduces  the problems concerning $D$-invariant subspaces of the space $\mathcal E(a;b)$ to the equivalent ones on closed submodules in a special weighted module of entire functions. 
We formulate the mentioned  criterion  in terms of the annihilator submodule of  $D$-invariant subspace $W$
\cite[Theorem 3]{NF-DAN}.
Then, we deduce from it different sufficient conditions of admitting of weak spectral synthesis by $D$-invariant subspace $W$ with discrete spectrum 
(\cite[Corollaries 1 and 2]{NF-DAN}, Theorem \ref{teorema9} below,  \cite[Theorems 1.1, 1.3]{ABB}).

This paper contains the explicit proofs for  most of the assertions announced in \cite{NF-DAN}  concerning weak spectral synthesis in the space $\mathcal E(a;b)$.
In addition, we prove one theorem on weak spectral synthesis for ''translation-invariant'' subspaces of  $\mathcal E (a;b)$ (Theorem \ref{teorema9} below).

The organization of the paper  is as follows. 
In Section 2 we describe the mentioned dual scheme.
Section 3 contains  a series of  results on  submodules of entire functions.
At last,
in Section 4  we derive  assertions on  $D$-invariant subspaces in $\mathcal E(a;b)$.

\section{Dual scheme}

{\bf 1.}
Let us define a locally convex space of entire functions  $\mathcal P (a;b)$ 
to be equal to  inductive limit of the sequence of the Banach spaces
 $\{ P_k\}$, where    $P_k$ consists of all entire functions $\varphi$ 
 with finite norm
$$
\| \varphi\|_k =\sup_{z\in\mathbb C} \frac{|\varphi (z)|}{(1+|z|)^k\exp (b_ky^{+}-a_ky^{-})}, \quad
y^{\pm}=\max\{ 0,\pm y\}, \quad z=x+{\text i} y .
$$
The inclusions $P_k\subset P_{k+1}$ are compact for each $k=1,2,\dots$ It follows that the space   
$\mathcal P(a;b)$ is a locally convex space of $(LN^*)$-type (see \cite{Seb}).
 Further, the operation 
 of multiplication by an independent variable $z$ acts
continuously in this space, that is, $\mathcal P(a;b)$ is a topological module over the
polynomial ring  $\mathbb C[z].$

It is well-known that 
the Fourier–Laplace transform $\mathcal F$ 
establishes a linear topological isomorphism between
the strong dual space
 $\mathcal E'(a;b)$ and the space $\mathcal P (a;b)$ \cite[chapter 7, theorem 7.3.1]{Horm}:
$$
S\in\mathcal E'(a;b) \longleftrightarrow \varphi\in{\mathcal P(a;b)}
\Longleftrightarrow \varphi =\mathcal F (S)=(S,e^{-{\text i} \, tz}).
$$

Let  $\mathcal J \subset\mathcal P(a;b)$ be a closed {\it submodule,} that is,  $\mathcal J$
is a closed subspace with the property $z\mathcal J\subset \mathcal J.$ 
Two natural characteristics of the submodule   $\mathcal J$
arise immediately: its {\it zero set} $\Lambda_{\mathcal J}$
and its {\it indicator segment}  $[c_{\mathcal J}; d_{\mathcal J}]$. The first one, $\Lambda_{\mathcal J}$,
is well-known. We always meet it when dealing with the  local description problem for ideals and submodules of holomorphic functions.
The notion of the  indicator segment  is introduced in  \cite{NF-DAN}, \cite{NF-UMJ-1}.
Below we recall both definitions.
Given a function  $\varphi\in \mathcal P(a;b),$  we set at any $\lambda\in\mathbb C$ 
  $$
n_{\varphi} (\lambda)=\begin{cases} 0,\quad \text{if} \quad\varphi(\lambda)\neq 0,\\
m,\quad\text{if}\quad\lambda \ \ \text{is a zero of}\quad\varphi
\quad\text{of muliplicity}\quad m.
\end{cases}
$$
 {\it  The divisor of a closed
submodule} $\mathcal J\subset \mathcal P(a;b)$ is defined as $n_{\mathcal J}(\lambda)=\min\limits_{\varphi\in\mathcal J} n_{\varphi}(\lambda)$.

The {\it zero set of a non-zero function} $\varphi\in\mathcal P(a;b)$ is
 $$\Lambda_{\varphi} =\{ (\lambda_k, m_k) : \ m_k=n_{\varphi }(\lambda_k)>0 , \  k=1,2,\dots\}$$
The {\it  zero set
of a submodule} $\mathcal J\neq\{ 0\}$
is 
  $$\Lambda_{\mathcal J} =\{ (\lambda_k, m_k) : \ m_k=n_{\mathcal J }(\lambda_k)>0 , \  k=1,2,\dots\}$$

As it is known (see, e.g., \cite{Levin}), any element of the space $\mathcal P(a;b)$ is a
function of completely regular growth with respect to
the order 1, and the  indicator diagram of any $\varphi\in\mathcal P(a;b)$ is a segment $[{\text i} c_{\varphi};{\text i} d_{\varphi}] \subset ({\text i} a;{\text i} b).$  
The  {\it indicator segment} $[c_{\mathcal J}; d_{\mathcal J}] $ of a submodule $\mathcal J$ is defined to be the segment (generally speaking, in $\overline{\mathbb R}$)
  with the endpoints 
\begin{equation}
c_{\mathcal J}=\inf\limits_{\varphi\in\mathcal J}c_{\varphi},\quad
d_{\mathcal J}=\sup\limits_{\varphi\in\mathcal J}d_{\varphi}.
\label{ind-segm}
\end{equation}

For a closed subspace   $W\subset \mathcal E(a;b)$  we set
 $$W^0=\{ S\in{\mathcal E'(a;b)} :\ \
(S,f)=0, \ \ f\in W\}.$$
This is a closed subspace of the strong dual space $\mathcal E'(a;b)$. It  is said to be {\it the annihilator subspace} of   $W$.

The following proposition establishes the duality between $D$-invariant subspaces $W\subset \mathcal E(a;b)$ 
and closed submodules $\mathcal J\subset\mathcal P(a;b)$
\cite[proposition 1]{NF-UMJ-1}.
\begin{proposition}
({\rm Duality principle.}) Between the set  $\{W\}$ of all $D$-invariant subspaces of the space $\mathcal E(a;b)$ and the set  $\{\mathcal J\}$ of all closed submodules of the module 
$\mathcal P (a;b)$, there
is a one-to-one correspondence, namely,
$$W\longleftrightarrow \mathcal J \Longleftrightarrow  {\mathcal J}={\mathcal F}(W^0).$$ 
Moreover,
\begin{equation}
\mathrm{Exp}\,   W=\{ t^je^{-{\text i}\lambda_k t}\ : \ \ j=0,\dots m_k-1,\ \ (\lambda_k,m_k)\in\Lambda_{\mathcal J}\},
\label{eW}
\end{equation}
 and the endpoints of the interval
$I_W$ are $c_{\mathcal J}$ and $d_{\mathcal J}$.
	\label{DP}
	\end{proposition}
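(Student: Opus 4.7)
The plan is to combine three ingredients: annihilator duality for a reflexive Fréchet space, the Fourier--Laplace isomorphism $\mathcal F\colon\mathcal E'(a;b)\to\mathcal P(a;b)$ cited above, and a Paley--Wiener-type identification of the support of a distribution with the indicator diagram of its transform.

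First, since $\mathcal E(a;b)$ is a reflexive Fréchet space, the annihilator map $W\mapsto W^0$ is a bijection between closed subspaces of $\mathcal E(a;b)$ and strongly closed subspaces of $\mathcal E'(a;b)$, with inverse $V\mapsto {}^0V=\{f\colon (S,f)=0,\ S\in V\}$. Composing with $\mathcal F$ yields the bijection $W\longleftrightarrow \mathcal J:=\mathcal F(W^0)$ between closed subspaces of $\mathcal E(a;b)$ and closed subspaces of $\mathcal P(a;b)$. To match $D$-invariance with the module structure, I would compute the transpose $D^T$ of $D$. Since $(D^TS,f)=(S,Df)$, one has
\[
\mathcal F(D^T S)(z)=(D^TS,e^{-{\text i} tz})=(S,-{\text i} z\, e^{-{\text i} tz})=-{\text i} z\,\mathcal F(S)(z),
\]
so $\mathcal F$ intertwines $D^T$ on $\mathcal E'(a;b)$ with multiplication by $-{\text i} z$ on $\mathcal P(a;b)$. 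Hence $DW\subset W$ iff $D^T W^0\subset W^0$ iff $z\mathcal J\subset\mathcal J$, and the bijection restricts to one between $D$-invariant subspaces and closed submodules.

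For the formula (\ref{eW}) I would differentiate the Fourier--Laplace transform under the duality pairing: if $\varphi=\mathcal F(S)$, then $\varphi^{(j)}(z)=(S,(-{\text i} t)^j e^{-{\text i} tz})$. Therefore $t^j e^{-{\text i}\lambda t}\in W$ iff $\varphi^{(j)}(\lambda)=0$ for every $\varphi\in\mathcal J$, which in view of the definitions of $n_{\mathcal J}$ and $\Lambda_{\mathcal J}$ is exactly (\ref{eW}). For the endpoints of $I_W$, I would first identify $W_I^0=\{S\in\mathcal E'(a;b)\colon \mathrm{supp}\, S\subset I\}$: the inclusion $\supset$ is immediate, and the converse follows because any test function compactly supported in $(a;b)\setminus I$ trivially lies in $W_I$, so vanishing of $S$ on $W_I$ forces $\mathrm{supp}\, S\subset I$. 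Next, a Paley--Wiener-type theorem, in the sharp form using the indicator diagram (see \cite{Levin}), asserts that the support of $S\in\mathcal E'(a;b)$ lies in a compact $[c,d]\subset(a;b)$ iff the indicator diagram of $\mathcal F(S)$ lies in $[{\text i} c,{\text i} d]$, i.e.\ $c\le c_{\mathcal F(S)}$ and $d_{\mathcal F(S)}\le d$. Thus $W_I\subset W$ (equivalently $\mathcal J\subset\mathcal F(W_I^0)$) holds iff $[c_{\mathcal J},d_{\mathcal J}]\subset I$, so the minimal relatively closed such $I$ has endpoints $c_{\mathcal J}$ and $d_{\mathcal J}$.

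The most delicate step is this last one: one must allow $I$ to be merely relatively closed in $(a;b)$ rather than compact, hence track half-open intervals whose endpoints may equal $a$ or $b$, with $c_{\mathcal J}$ or $d_{\mathcal J}$ possibly in $\overline{\mathbb R}$. The Paley--Wiener correspondence must be applied uniformly along the exhausting sequence $[a_k;b_k]$, using $\mathcal E'(a;b)=\bigcup_k \mathcal E'[a_k;b_k]$, so that the infima and suprema defining $c_{\mathcal J}$, $d_{\mathcal J}$ in (\ref{ind-segm}) match the corresponding extremal points of the supports as $S$ ranges over $W^0$.
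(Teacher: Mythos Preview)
The paper does not actually prove this proposition: it is quoted from \cite{NF-UMJ-1} (``\cite[proposition 1]{NF-UMJ-1}''), so there is no in-paper argument to compare against. Your sketch is the standard route to such duality principles and is essentially correct.

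A couple of small points worth tightening. First, in your derivation of (\ref{eW}) you assert ``$t^j e^{-{\text i}\lambda t}\in W$ iff $\varphi^{(j)}(\lambda)=0$ for every $\varphi\in\mathcal J$''. The forward implication is the derivative computation you give; the converse is Hahn--Banach. To pass from this pointwise statement to the description of $\mathrm{Exp}\,W$ as $\{t^j e^{-{\text i}\lambda_k t}:\ j<m_k\}$ you implicitly use that the set of admissible $j$'s is an initial segment; this follows either from $D$-invariance of $W$ (differentiating $t^je^{-{\text i}\lambda t}$) or, dually, from the submodule property $z\mathcal J\subset\mathcal J$ together with the Leibniz identity $(z\varphi)^{(j)}(\lambda)=\lambda\varphi^{(j)}(\lambda)+j\varphi^{(j-1)}(\lambda)$. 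It is worth saying this explicitly.

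Second, in the endpoint argument your equivalence ``$W_I\subset W$ iff $[c_{\mathcal J},d_{\mathcal J}]\subset I$'' is the right one, and your remark that the relatively-closed case (when $c_{\mathcal J}=a$ or $d_{\mathcal J}=b$, possibly infinite) needs the exhaustion $[a_k;b_k]$ and $\mathcal E'(a;b)=\bigcup_k\mathcal E'[a_k;b_k]$ is exactly what is required; the existence of $I_W$ itself is not something you need to reprove, since the paper imports it from \cite[Theorem~4.1]{Al-Kor}.
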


Obviously,  $D$-invariant  subspace $W$  admits weak spectral synthesis if and only if it is the minimal one among all $D$-invariant subspaces $\tilde{W}$ with
properties
	$$
	I_{\tilde{W}}=I_W,\quad \text{Exp}\, \tilde{W}=\text{Exp}\, {W}.
	$$
	By the duality principle (proposition \ref{DP}), we 
	see that the 
	 {\it annihilator submodule } $\mathcal J=\mathcal F (W^0)$ of such a subspace $W$
	 should be maximal one among all 
submodules $\tilde{\mathcal J}\subset \mathcal P(a;b)$ such that
	$$
	\Lambda_{\tilde{\mathcal J}}=\Lambda_{\mathcal J},\quad [c_{\tilde{\mathcal J}};d_{\tilde{\mathcal J}}] =[c_{\mathcal J};d_{\mathcal J}].
	$$
	If so, we say the submodule $\mathcal J$ to be {\it weakly loclizable}. In the other words,  submodule $\mathcal J\subset \mathcal P(a;b)$ is  weakly localizable
	if it contains any function $\psi\in\mathcal P(a;b),$ which vanishes (with the  multiplicities)
	on the   set $\Lambda_{\mathcal J}$ and has the  indicator diagram 
	$[\text{i} c_{\psi}; \text{i} d_{\psi}] \subset [\text{i} c_{\mathcal; J};\text{i} d_{\mathcal J}].$

As a conclusion we get 	
		\begin{proposition}
	$D$-invariant subspace $W\subset \mathcal E(a;b)$ admits weak spectral synthesis if and only if its annihilator submodule 
	$\mathcal J=\mathcal F (W^0)\subset\mathcal P(a;b)$
	is weakly localizable.
	\label{prop1}
\end{proposition}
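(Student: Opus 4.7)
My plan is to deduce Proposition~\ref{prop1} by unwinding definitions and invoking the duality principle (Proposition~\ref{DP}) already established.

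First, I would recast weak spectral synthesis as a minimality statement. Let $\tilde W$ be any $D$-invariant subspace of $\mathcal E(a;b)$ with $I_{\tilde W}=I_W$ and $\mathrm{Exp}\,\tilde W=\mathrm{Exp}\,W$. The first equality forces $W_{I_W}=W_{I_{\tilde W}}\subset\tilde W$, and the second gives $\mathcal L(\mathrm{Exp}\,W)\subset\tilde W$. As $\tilde W$ is closed, it must contain $\overline{W_{I_W}+\mathcal L(\mathrm{Exp}\,W)}$. Conversely, that closure is itself a closed $D$-invariant subspace of $W$ sharing both invariants with $W$ (by monotonicity of $I_\bullet$ and $\mathrm{Exp}\,\bullet$ under inclusion). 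Hence $W$ admits weak spectral synthesis, i.e., equals $\overline{W_{I_W}+\mathcal L(\mathrm{Exp}\,W)}$, if and only if $W$ is the minimal element of the family $\{\tilde W:\ I_{\tilde W}=I_W,\ \mathrm{Exp}\,\tilde W=\mathrm{Exp}\,W\}$.

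Second, I would translate this minimality statement through duality. The bijection $W\leftrightarrow\mathcal J=\mathcal F(W^0)$ of Proposition~\ref{DP} is inclusion-reversing on both sides. By formula (\ref{eW}), the condition $\mathrm{Exp}\,\tilde W=\mathrm{Exp}\,W$ corresponds to $\Lambda_{\tilde{\mathcal J}}=\Lambda_{\mathcal J}$, and by the endpoint identification in the same proposition, $I_{\tilde W}=I_W$ corresponds to $[c_{\tilde{\mathcal J}};d_{\tilde{\mathcal J}}]=[c_{\mathcal J};d_{\mathcal J}]$. Under inclusion-reversal, ``$W$ minimal'' becomes ``$\mathcal J$ maximal'' among submodules sharing $\Lambda$ and the indicator segment, which is exactly the definition of weak localizability.

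The only mildly delicate step will be verifying that $\overline{W_{I_W}+\mathcal L(\mathrm{Exp}\,W)}$ really inherits $I_W$ and $\mathrm{Exp}\,W$ from $W$. For the residual interval, the containments $W_{I_W}\subset\overline{W_{I_W}+\mathcal L(\mathrm{Exp}\,W)}\subset W$ together with minimality of the $I_\bullet$ invariant pin it down to $I_W$; for exponential monomials, the analogous double inclusion and monotonicity of $\mathrm{Exp}\,\bullet$ give equality with $\mathrm{Exp}\,W$. Once this routine check is done, Proposition~\ref{prop1} follows directly from the duality principle with no further analytic input.
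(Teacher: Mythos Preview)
Your proposal is correct and follows essentially the same route as the paper: the paper's proof is the short paragraph immediately preceding the proposition, which declares the minimality reformulation ``obvious,'' then applies the duality principle (Proposition~\ref{DP}) to convert minimality of $W$ into maximality of $\mathcal J$. Your write-up is in fact more thorough, since you spell out why $\overline{W_{I_W}+\mathcal L(\mathrm{Exp}\,W)}$ inherits the invariants $I_W$ and $\mathrm{Exp}\,W$---a point the paper leaves implicit.
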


Let us notice that
the weak localizable submodule $\mathcal J=\mathcal F (W^0)$ turns to be  {\it localizable} or {\it ample} (see \cite{IF-loc-1}, \cite{IF-loc-2}) 
if 
\begin{equation}
I_W=[c_{\mathcal J};d_{\mathcal J}]\bigcap (a;b) = (a;b).
\label{class-s}
\end{equation}
 Under the assumption \ref{class-s}, Proposition \ref{prop1} becomes the classical duality principle
(cf. \cite{IF-synth}).

Unless otherwise specified,  we consider only closed submodules
$\mathcal J \subset\mathcal P(a;b).$

A submodule $\mathcal J$ is said to be  {\it stable at a point} $\lambda\in\mathbb C$
if for any $\varphi\in\mathcal J$  the inequality $n_{\varphi} (\lambda)>n_{\mathcal J}(\lambda)$ implies  $\varphi/ (z-\lambda )\in\mathcal J.$ A submodule $\mathcal J$  is said to be {\it stable} if it is stable at any point $\lambda\in\mathbb C.$
These notions are   introduced 
in \cite{IF-synth}, \cite{IF-loc-1}.

The propositions \cite[Proposition 3.1]{ABB}  and \cite[Proposition 2]{NF-UMJ-1}, taking together, can be expressed as the following assertion.
\begin{proposition}
The spectrum of $D$-invariant subspace $W\subset\mathcal E(a;b)$ is discrete if and only if its annihilator submodule $\mathcal J$ is stable.
\label{prop2}
\end{proposition}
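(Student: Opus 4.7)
The plan is to dualize the spectral question for $D|_{W}$ via the Fourier--Laplace transform, reducing it to an algebraic condition on the quotient module $\mathcal P(a;b)/\mathcal J$. Since $\mathcal F$ is a topological isomorphism $\mathcal E'(a;b)\to\mathcal P(a;b)$ intertwining $D^{*}$ with multiplication by $-\mathrm{i}z$, and the annihilator identification $W^{0}\leftrightarrow\mathcal J$ is compatible with it, the restriction $D|_{W}$ is spectrally equivalent to multiplication by $-\mathrm{i}z$ on $\mathcal P(a;b)/\mathcal J$. Writing $\mu=-\mathrm{i}\lambda$, I then have $\lambda\in\sigma(D|_{W})$ exactly when the operator $M_{z-\mu}$ on $\mathcal P/\mathcal J$ fails to be a topological isomorphism; since $\mathcal P(a;b)$ is of $(LN^{*})$-type, the open mapping theorem makes non-bijectivity the relevant condition.

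Next I would prove the algebraic criterion: $M_{z-\mu}$ is bijective on $\mathcal P/\mathcal J$ if and only if $n_{\mathcal J}(\mu)=0$ and $\mathcal J$ is stable at $\mu$. Surjectivity follows from the splitting $\chi=\chi(\mu)+(z-\mu)\widetilde\chi$: solvability of $(z-\mu)\eta\equiv\chi\pmod{\mathcal J}$ for every $\chi\in\mathcal P$ reduces to the existence of an element of $\mathcal J$ not vanishing at $\mu$, that is, $n_{\mathcal J}(\mu)=0$. For injectivity, a nonzero kernel representative $\eta\notin\mathcal J$ with $(z-\mu)\eta\in\mathcal J$ witnesses a failure of stability at $\mu$ (noting $n_{(z-\mu)\eta}(\mu)\geq 1>n_{\mathcal J}(\mu)$ once $n_{\mathcal J}(\mu)=0$), and conversely any such failure of stability produces a kernel element directly.

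The implication \emph{$\mathcal J$ stable $\Rightarrow \sigma(D|_{W})$ discrete} then follows immediately from this criterion, since under stability the set of bad $\mu$ is precisely the zero set $\Lambda_{\mathcal J}$, which is discrete in $\mathbb C$ (it is contained in the zero set of any nonzero $\varphi\in\mathcal J$). The converse is where the real difficulty lies. By the Aleman--Korenblum dichotomy (Theorem~2.1 of \cite{Al-Kor}), a discrete $\sigma(D|_{W})$ is a proper subset of $\mathbb C$, and the criterion above produces at least one $\mu_{0}$ at which $\mathcal J$ is stable, but this is far from establishing global stability. My plan here is contrapositive: given a non-stability point $\mu_{1}$, extract a nonzero eigenvector $[\psi]$ of $M_{z}$ in $\mathcal P/\mathcal J$ at $\mu_{1}$, enlarge $\mathcal J$ to the closed submodule generated by $\mathcal J$ and $\psi$, and use the corresponding strict inclusion of closed $D$-invariant subspaces in $\mathcal E(a;b)$ together with repeated application of the dichotomy to force $\sigma(D|_{W})=\mathbb C$, contradicting discreteness. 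This propagation step, from a single local failure of stability to global irreducibility of the spectrum, is the main obstacle; the remainder of the argument is standard duality and algebraic bookkeeping.
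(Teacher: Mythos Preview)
The paper does not give a self-contained proof of this proposition: it is stated as the conjunction of \cite[Proposition~3.1]{ABB} and \cite[Proposition~2]{NF-UMJ-1}, with no argument supplied. So there is no ``paper's proof'' to compare to, and your write-up is an attempt to reconstruct the content of those two cited results.

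Your duality set-up and the algebraic criterion for bijectivity of $M_{z-\mu}$ on $\mathcal P(a;b)/\mathcal J$ are correct, and the implication ``$\mathcal J$ stable $\Rightarrow$ $\sigma(D|_W)$ discrete'' follows cleanly from them. The gap is in the converse. Your proposed propagation step---enlarging $\mathcal J$ by the eigenvector $\psi$, passing to a smaller $D$-invariant subspace, and ``repeatedly applying the dichotomy''---does not do what you need: a strict inclusion $W'\subsetneq W$ tells you nothing directly about $\sigma(D|_W)$, and the Aleman--Korenblum dichotomy applied to $W'$ constrains $\sigma(D|_{W'})$, not $\sigma(D|_{W})$. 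As written, the argument does not close.

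The clean route you are missing is exactly the Krasichkov--Ternovskii fact the paper itself quotes later (in the proof of Theorem~\ref{teorema3}): stability of a closed submodule at a single point implies stability at every point \cite[Proposition~4.2 and Remark~1, \S4]{IF-loc-2}. Contrapositively, if $\mathcal J$ fails to be stable at one $\mu_1$, it fails at \emph{every} $\mu\in\mathbb C$. Feeding this into your own bijectivity criterion gives immediately that $M_{z-\mu}$ is non-bijective for all $\mu$, hence $\sigma(M_z\text{ on }\mathcal P/\mathcal J)=\mathbb C$ and therefore $\sigma(D|_W)=\mathbb C$, which is not discrete. With this single citation in place of your enlargement scheme, the proof is complete; everything else you wrote stands.
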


\section{Weakly localizable submodules in $\mathcal P(a;b)$.}

{\bf 1.}
 Proposition \ref{prop1} reduces the problem of weak spectral synthesis for $D$-invariant subspace $W$
to the studying  of weak localizability of its annihilator submodule $\mathcal J$.
One can easy see that a weakly localizable submodule is necessarily stable.

Below we recall the  definitions of some notions we will need in the further considerations. These notions have been introduced in \cite[ \S 1 and \S 3, remark 2  ]{IF-loc-1}.
 We give them  for the case of scalar functions.

A submodule $\mathcal J$ is {\it $b$-saturated with respect to a function} $\psi\in\mathcal P(a;b)$ if there exists a bounded set $B\subset\mathcal P(a;b)$ sucn that the following implication is true:
if  $\Phi (z)$ is an entire function and the inequality
$
|\Phi (z)\varphi (z)|\le |\varphi (z)|+|\psi (z)|  
$
 holds each $z\in\mathbb C$ and for any function  $\varphi\in B\bigcap\mathcal J,$ 
then $\Phi =const .$ 

A locally convex space $P$ of entire functions is said to be
 {\it $b$-stable} if, for any bounded set $B\subset\mathcal P$, 
the set of all entire  
functions $\psi$ of the form $$ \psi (z)=\varphi (z)/(z-\lambda),\quad \lambda\in\mathbb C,  \quad\varphi\in B,$$
is bounded in $P$. 

As we have noticed above,  $\mathcal P(a;b)$ is a $(LN^*)$-type space. It follows that a subset  $B\subset \mathcal P(a;b)$ 
is bounded if and only if, for some $k,$ it is contained and bounded in the  Banach space  $P_k$  \cite[Theorem 2]{Seb}.
Using this fact and the definition of the topology in  $\mathcal P(a;b)$ we can easily  verify that the space
 $\mathcal P(a;b)$ is a bornologic and  $b$-stable. 
For spaces  of holomorphic vector-functions, which are bornologic and  $b$-stable, I.F. Krasichkov-Ternovskii proved the following theorem \cite{IF-loc-1}.
(We cite it here for the case of our space  $\mathcal P(a;b)$.)

{\sl Bornological Version of Individual Theorem.}  {\it Let $\mathcal J$ be a stable submodule in  $\mathcal P(a;b)$,
 and let $\psi\in \mathcal P(a;b)$ be a function satisfying the following assumption: $n_{\psi}(z)\ge n_{\mathcal J}(z)$ in the whole complex plane. Then,  $\psi\in\mathcal J$
if and only if  $\mathcal J$ is $b$-saturated with respect to $\psi.$}

This  assertion   allows us to obtain the criterion of weak localizibilty for closed submodules of the module $\mathcal P(a;b)$.

Given a  function $\varphi\in\mathcal P(a;b),$  we denote by ${\mathcal  J}(\varphi)$ the submodule formed by all functions $\psi=\omega\varphi \in\mathcal P(a;b) .$ 
 Here  $\omega$ is an entire function of mi\-ni\-mal exponential type with the integral $\int\limits_{-\infty}^{\infty} \left(\log^+|\omega(x)|/(1+x^2)\right){\mathrm d} x$ to be converging. In the  other words, ${\mathcal  J}(\varphi)$ consists of all functions $\psi\in\mathcal P(a;b) $, which are dividing by  $\varphi$ and  have the same indicator diagram as $\varphi.$ 
Clearly, the submodule ${\mathcal  J}(\varphi)$ is weakly localizable.  
  
To begin with, we prove one auxiliary assertion.

\begin{lemma}
Let $\mathcal J$ be a stable submodule.
Suppose that for a function $\varphi\in\mathcal P(a;b)$ the inequality $n_{\varphi}(z)\ge n_{\mathcal J}(z)$ holds
 for any $z\in\mathbb C,$ and its indicator diagram $[\text{i}c_{\varphi};\text{i}d_{\varphi}]$ is a subset of the open interval $(\text{i}c_{\mathcal J};\text{i}d_{\mathcal J})$. 

Then $J(\varphi)\subset \mathcal J.$
\label{lem-help}
\end{lemma}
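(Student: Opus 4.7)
The plan is to invoke the Bornological Version of Individual Theorem recalled above. Fix an arbitrary $\psi=\omega\varphi\in\mathcal J(\varphi)$; since the auxiliary factor $\omega$ has minimal exponential type, $\psi$ has the same indicator diagram as $\varphi$, and the pointwise inequality $n_\psi(z)\ge n_\varphi(z)\ge n_{\mathcal J}(z)$ together with stability of $\mathcal J$ reduces the desired inclusion $\psi\in\mathcal J$ to verifying that $\mathcal J$ is $b$-saturated with respect to $\psi$.

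To build the bounded ``test set'' $B$ required by the $b$-saturation definition, I use the strict inclusion $[\text{i}c_\varphi;\text{i}d_\varphi]\subset(\text{i}c_{\mathcal J};\text{i}d_{\mathcal J})$ and the defining identities (\ref{ind-segm}) to pick two functions $\varphi_-,\varphi_+\in\mathcal J$ with $c_{\varphi_-}<c_\varphi$ and $d_{\varphi_+}>d_\varphi$. Setting $B:=\{\varphi_-,\varphi_+\}$ gives a finite (hence bounded) subset of $\mathcal P(a;b)$. Now suppose an entire function $\Phi$ satisfies $|\Phi(z)\varphi_\pm(z)|\le|\varphi_\pm(z)|+|\psi(z)|$ for every $z\in\mathbb C$; the task is to deduce that $\Phi$ is a constant.

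The last step is a Phragm\'en--Lindel\"of-type growth argument using that each $\varphi_\pm$ is an entire function of completely regular growth of order one (see \cite{Levin}). Off a $C_0$-set of discs around its zeros one has $|\varphi_+(z)|\ge\exp\bigl(d_{\varphi_+}y-o(|z|)\bigr)$ as $y=\mathrm{Im}\,z\to+\infty$, while $|\psi(z)|\le\exp\bigl(d_\varphi y+o(|z|)\bigr)$ in the same region. Since $d_\varphi<d_{\varphi_+}$, the quotient $|\psi|/|\varphi_+|$ decays exponentially along every ray in a sector strictly inside the open upper half plane, and the assumed inequality yields $|\Phi(z)|\le 2$ there off the exceptional set; the analogous estimate with $\varphi_-$ handles the lower half plane. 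Patching the sector estimates by the standard CRG technique from \cite{Levin} and applying Phragm\'en--Lindel\"of on expanding sectors that avoid the zero clusters of $\varphi_\pm$ then forces $\Phi$ to be bounded on the whole plane, so $\Phi$ is a constant. The Individual Theorem now gives $\psi\in\mathcal J$, and since $\psi\in\mathcal J(\varphi)$ was arbitrary, $\mathcal J(\varphi)\subset\mathcal J$.

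The principal technical obstacle is the passage from ``pointwise exponential decay of $|\psi|/|\varphi_\pm|$ off a $C_0$-set'' to a genuine bound on the entire function $\Phi$ \emph{including} along rays near the real axis, where the indicators $h_{\varphi_\pm}$ vanish and one cannot naively dominate $|\psi|$ by $|\varphi_\pm|$. Resolving this requires either a careful maximum-principle argument on contours avoiding the zeros of $\varphi_\pm$, or, in the spirit of \cite{IF-loc-1}, a sector-by-sector application of Phragm\'en--Lindel\"of that uses the refined CRG lower bounds for $\varphi_\pm$ to control the growth of $\Phi$ across the zero clusters and then to interpolate to the real axis.
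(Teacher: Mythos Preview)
Your overall strategy coincides with the paper's: for each $\psi\in\mathcal J(\varphi)$ verify $b$-saturation and apply the Bornological Individual Theorem, choosing test functions in $\mathcal J$ whose indicators overshoot $[\mathrm ic_\varphi;\mathrm id_\varphi]$ on each side. The difference lies in the test set, and that difference is exactly what produces the obstacle you flag in your last paragraph.

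Instead of the pair $B=\{\varphi_-,\varphi_+\}$, the paper takes the single function $\varphi_B=\varphi_-+\varphi_+\in\mathcal J$ and sets $B=\{\varphi_B\}$. The indicator diagram of $\varphi_B$ is then $[\mathrm i\min(c_{\varphi_-},c_{\varphi_+});\mathrm i\max(d_{\varphi_-},d_{\varphi_+})]$, which strictly contains $[\mathrm ic_\varphi;\mathrm id_\varphi]$ on \emph{both} ends. Hence the single inequality $|\Phi\varphi_B|\le|\psi|+|\varphi_B|$ already yields $|\psi|/|\varphi_B|\to 0$ uniformly in small sectors around \emph{both} $\theta=\pi/2$ and $\theta=-\pi/2$ (off a set of zero relative measure, by CRG of $\varphi_B$), and the maximum modulus principle then gives $|\Phi|\le 2$ on the whole imaginary axis. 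The same inequality and CRG of $\varphi_B$ show that $\Phi$ has minimal exponential type. A function of minimal type bounded on a line is constant (Phragm\'en--Lindel\"of in each half-plane, then Liouville). The near-real-axis analysis never enters.

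Your two-function version can in fact be salvaged along the same lines: $\varphi_+$ yields boundedness of $\Phi$ on the positive imaginary ray and indicator $\le 0$ in the upper half-plane, $\varphi_-$ does the same below, and one again concludes via ``bounded on a line $+$ minimal type''. But as written you try to obtain global boundedness of $\Phi$ directly by patching sector estimates through the real axis, and that route is both unnecessary and genuinely delicate; the proposal stops short of carrying it out. The paper's passage to the sum $\varphi_B$ is the device that removes this difficulty.
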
  

\begin{proof}
Let $\tilde{\varphi}$ be an arbitrary function in $\mathcal J(\varphi).$
By assumption, we have $c_{\mathcal J}<c_{\varphi}$. Using the definition of $c_{\mathcal J}$ we can find
a function $\varphi_1\in\mathcal J$
with the property $$c_{\mathcal J}\le c_{\varphi_1}<c_{\varphi} .$$
We can also find another function $\varphi_2\in\mathcal J$ satisfying the inequalities
$$
d_{\varphi}<d_{\varphi_1}\le d_{\mathcal J}.
$$
Set  $\varphi_B =\varphi_1+\varphi_2.$
 It follows from the definition of the submodule $\mathcal J(\varphi)$ that
the indicator diagram of an arbitrary function $\tilde{\varphi}\in\mathcal  J(\varphi)$ is the segment
$[\mathrm{i} c_{\varphi};\mathrm{i} d_{\varphi}]$. Therefore, this indicator diagram is a compact subset of the indicator diagram $[\mathrm{i} c_{\varphi_B};\mathrm{i} d_{\varphi_B}] .$ 
Taking into the account that the function $\varphi_B$ has  completely regular growth,  we get 
\begin{equation}
\frac{\tilde {\varphi} (z)}{\varphi_B (z)}\to 0,
\label{tend}
\end{equation}
where $z=re^{i\theta}$ and $r$ tends to $\infty$ lying outside a set of zero relative measure.
Moreover, the relation (\ref{tend}) holds
uniformly on $\theta \in \{ |\pi/2 -\theta|<\delta\}\bigcup \{|-\pi/2-\theta|<\delta\}$
if   $\delta>0$ is sufficiently small.

Now, it is not difficult to show that the submodule $\mathcal J$ is $b$-saturated with respect  to any function
 $\tilde{\varphi}\in\mathcal J(\varphi).$
For this purpose we set $B=\{\varphi_B\}$ and consider an arbitrary entire function $\omega$ satisfying  
the inequality
\begin{equation}
|\omega(z)\varphi_B(z)|\le |\tilde{\varphi} (z) | +|\varphi_B (z)| ,\quad z\in \mathbb C.
\label{satur}
\end{equation}
From the relations (\ref{tend}), (\ref{satur}), by the maximum modulus principle, we derive that 
the function  $\omega$ is bounded on the imaginary axis.
The relation (\ref{satur}) and the completely regular growth of the function $\varphi_B$
imply that the function $\omega$ has minimal exponential type.
In what follows, $\omega=const$ 
 and the stable submodule $\mathcal J$ is $b$-saturated with respect to the function $\tilde{\varphi}.$
Applying the above cited Bornological Version of Individual Theorem, we conclude that 
$\tilde{\varphi}\in \mathcal J$.

\end{proof}

\begin{theorem}  Assume that $\mathcal J$ is a stable submodule. This submodule is weakly localizable if and only if
there exists a function $\varphi\in \mathcal J $ with the property 
			\begin{equation}
			{\mathcal J}(\varphi)\subset {\mathcal J}.
			\label{pr-fi}
			\end{equation}
      \label{teorema1}
      \end{theorem}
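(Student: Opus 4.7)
The plan is to prove the two implications separately. For the necessity $(\Rightarrow)$, I would pick any non-zero $\varphi \in \mathcal J$ and verify that $\mathcal J(\varphi) \subset \mathcal J$ follows directly from weak localizability: any $\psi = \omega \varphi \in \mathcal J(\varphi)$ has the same indicator diagram $[\mathrm{i} c_\varphi; \mathrm{i} d_\varphi] \subset [\mathrm{i} c_{\mathcal J}; \mathrm{i} d_{\mathcal J}]$ as $\varphi$ (since the admissible multiplier $\omega$ is of minimal exponential type) and its divisor satisfies $n_\psi \geq n_\varphi \geq n_{\mathcal J}$ pointwise, so weak localizability places $\psi$ into $\mathcal J$. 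Stability of $\mathcal J$ is not used in this direction.

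For the sufficiency $(\Leftarrow)$ I would fix an arbitrary $\psi \in \mathcal P(a;b)$ satisfying $n_\psi \geq n_{\mathcal J}$ and $[\mathrm{i} c_\psi; \mathrm{i} d_\psi] \subset [\mathrm{i} c_{\mathcal J}; \mathrm{i} d_{\mathcal J}]$, and split into two cases according to whether the indicator of $\psi$ meets the boundary of the indicator segment of $\mathcal J$. In the strict-interior case $[\mathrm{i} c_\psi; \mathrm{i} d_\psi] \subset (\mathrm{i} c_{\mathcal J}; \mathrm{i} d_{\mathcal J})$, Lemma~\ref{lem-help} applied directly to $\psi$ yields $\mathcal J(\psi) \subset \mathcal J$, and in particular $\psi \in \mathcal J$ upon taking $\omega \equiv 1$; only the stability of $\mathcal J$ is needed here, not the existence of the special $\varphi$.

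The boundary case, $c_\psi = c_{\mathcal J}$ or $d_\psi = d_{\mathcal J}$, is the main obstacle. The proof of Lemma~\ref{lem-help} breaks down because one cannot find $\varphi_1 \in \mathcal J$ with $c_{\varphi_1} < c_\psi$ (respectively $\varphi_2 \in \mathcal J$ with $d_{\varphi_2} > d_\psi$), so an auxiliary function $\varphi_B \in \mathcal J$ whose indicator strictly contains $\psi$'s cannot be assembled from $\mathcal J$ alone. To handle this case I would apply the Bornological Version of the Individual Theorem and verify $b$-saturation of $\mathcal J$ with respect to $\psi$ via a bounded family $B \subset \mathcal P(a;b)$ built from sums $\omega\varphi + \varphi_2$, where $\omega$ runs over a bounded set of admissible multipliers (so that $\omega\varphi \in \mathcal J(\varphi) \subset \mathcal J$) and $\varphi_2 \in \mathcal J$ compensates at the non-boundary endpoint. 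The point of the hypothesis $\mathcal J(\varphi) \subset \mathcal J$ is to supply enough elements of $\mathcal J$ with matching boundary behavior so that, together with completely regular growth of the constructed $\varphi_B$, a Phragm\'en--Lindel\"of argument analogous to the one in the proof of Lemma~\ref{lem-help} still forces the function $\Phi$ appearing in the $b$-saturation inequality to be constant, thereby delivering $\psi \in \mathcal J$.
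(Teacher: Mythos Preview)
Your necessity argument and your interior case for the target function $\psi$ are both correct; the latter is even a slight simplification, since Lemma~\ref{lem-help} applied directly to $\psi$ does the job without invoking the special $\varphi$ at all.

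The gap is in the boundary case. The Phragm\'en--Lindel\"of mechanism that drives Lemma~\ref{lem-help} rests on the fact that $\tilde\varphi/\varphi_B\to 0$ along the imaginary axis because the indicator segment of $\tilde\varphi$ sits \emph{strictly} inside that of $\varphi_B$. Once $c_\psi=c_{\mathcal J}$ (or $d_\psi=d_{\mathcal J}$), no element of $\mathcal J$ can have $c$-endpoint smaller than $c_\psi$, so for every $\tilde\varphi\in\mathcal J$ the ratio $|\psi|/|\tilde\varphi|$ fails to decay along that half of the imaginary axis. Enlarging $B$ by sums $\omega\varphi+\varphi_2$ does not help: each such function still has indicator segment contained in $[c_{\mathcal J};d_{\mathcal J}]$, so you never gain the strict containment that the Lemma~\ref{lem-help} argument needs. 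The phrase ``matching boundary behavior'' does not point to a concrete mechanism, and the case $c_\psi=c_{\mathcal J}$, $d_\psi=d_{\mathcal J}$ (no ``non-boundary endpoint'' at all) is not even addressed.

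The paper handles this in a genuinely different way. It first \emph{upgrades} $\varphi$, using translates $e^{\mathrm i\delta z}\varphi$ and Lemma~\ref{lem-help}, to a function $\Phi\in\mathcal J$ with indicator segment exactly $[c_{\mathcal J};d_{\mathcal J}]$ and $\mathcal J(\Phi)\subset\mathcal J$. With such a $\Phi$ in hand it takes the bounded set $B=\{\tilde\varphi\ \text{entire}:|\tilde\varphi|\le|\Phi|+|\psi|\}$ and runs an \emph{iteration}: from the $b$-saturation inequality with test function $\Phi$ one gets $\omega\Phi\in B\cap\mathcal J$ (this is exactly where $\mathcal J(\Phi)\subset\mathcal J$ enters), then testing against $\omega\Phi$ gives $\tfrac{1}{2}\omega^2\Phi\in B\cap\mathcal J$, and inductively $|\omega|^n|\Phi|\le 2^{n-1}(|\Phi|+|\psi|)$ for every $n$. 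Taking $n$-th roots forces $|\omega|\le 2$ pointwise and hence $\omega=\mathrm{const}$ by Liouville. This bootstrap is the missing idea; it replaces the unavailable decay of $|\psi|/|\tilde\varphi|$ by an inequality that gets \emph{stronger} with each iteration.
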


\begin{proof}
We should notice that the assertion is trivial when $c_{\mathcal J}=d_{\mathcal J}$.
Indeed, in this case $\mathcal J$ is exactly the set  $\{\alpha e^{-\text{i}\, c_{\mathcal J}z}\},$
$\alpha\in\mathbb C.$
In what follows, we  assume that 
$c_{\mathcal J}<d_{\mathcal J}.$

Clearly, we only need to prove the sufficient part of the theorem:  the existence of a function $\varphi\in\mathcal J$ with the property (\ref{pr-fi}) implies that the submodule $\mathcal J$ is weakly localizable.

1) First, we suppose that (\ref{pr-fi}) holds for a function $\varphi$ which  indicator diagram coincides with to the segment $[\text{i} c_{\mathcal J};\text{i} d_{\mathcal J}].$

Given an arbitrary function  $\psi\in \mathcal P(a;b)$ with  zero set $\Lambda_{\psi}\supset \Lambda_{\mathcal J}$
and  indicator diagram contained in $[\text{i}c_{\mathcal J};\text{i}d_{\mathcal J}],$
we will verify that the submodule $\mathcal J$ is $b$-saturated with respect to the function $\psi.$
Set 
$$
B =\{ \tilde{\varphi}\in Hol(\mathbb C): \ \ |\tilde{\varphi}(z)|\le |\psi (z)|+|\varphi (z)|,\ \ z\in\mathbb C \}.
$$
 From the topological  properties of the space $\mathcal P(a;b)$,
 it follows that  $B$ is a bounded subset of $\mathcal P(a;b)$.
Let $\omega $ be an entire function  satisfying the inequality
\begin{equation}
|\omega (z)\tilde{\varphi}(z)|\le |\tilde{\varphi} (z)|+|\psi (z)|
\label{satur1}
\end{equation}
in the whole complex plane
for any function $\tilde{\varphi}\in B\bigcap \mathcal J$.
 In particular, this inequality is true  when $\tilde{\varphi}=\varphi $.
In what follows,  $\omega$ is a function of minimal exponential type and  $\omega\varphi\in 
\mathcal J(\varphi).$
The inclusion
$\mathcal J(\varphi)\subset \mathcal J ,$ (\ref{satur1}) and the definition of the set $B$,
lead to the inclusion 
 $\omega\varphi \in B\bigcap \mathcal J.$
Setting $\tilde{\varphi} =\omega\varphi$ in  (\ref{satur1}), we get  the inequalities 
$$
\left|\omega^2 (z) \varphi (z)\right|\le \left|\omega (z){\varphi} (z)\right|+|\psi (z)|\le 2 (|\varphi (z)|+|\psi (z)|) ,
$$
for any  $z\in\mathbb C$.
Further, arguing as we have  done  for the function $\omega\varphi$, we derive that
$$
\frac{\omega^2}{2}\varphi\in B\bigcap \mathcal J.
$$
 Now, setting $\tilde{\varphi} =\frac{\omega^2}{2}\varphi$ in  (\ref{satur1}) gives us
 the relations
$$
|\frac{\omega^3 (z)}{2} \varphi (z)|\le |\frac{\omega^2 (z)}{2}{\varphi} (z)|+|\psi (z)|\le 2 (|\varphi (z)|+|\psi (z)|) 
$$
in the whole complex plane.
 In what  follows, the inclusion $$\frac{\omega^3}{2^2}\varphi \in B\bigcap \mathcal J$$
is valid.

Continuing to argue by  the similar way,   we  obtain  that
\begin{equation}
\frac{|\omega^n (z)|}{2^{n-1}} |\varphi(z)|\le |\varphi (z)|+|\psi (z)|
\label{ineq-n}
\end{equation}
 for each
 $n\in\mathbb N$ and for any $z\in\mathbb C.$
The only case when it can happen is  $\omega =const .$
Hence, the submodule $\mathcal J$ is $b$-saturated with respect to the function $\psi .$
Applying  Bornological Version of Individual Theorem we get the  inclusion 
$\psi\in \mathcal J$. This is true for any function $\psi\in\mathcal P(a;b)$ satisfying $\Lambda_{\psi}\supset\Lambda_{\mathcal J}.$
Therefore, we conclude that the submodule $\mathcal J$ is weakly localizable.

\medskip

2) Let us now consider the case when $\mathcal J(\varphi)\subset \mathcal J$ and the indicator diagram $[\text{i} c_{\varphi};\text{i} d_{\varphi}]$ of the function $\varphi$ is a proper subset of the segment $[\text{i} c_{\mathcal J};\text{i} d_{\mathcal J}].$
We also assume in this part of the proof that $[\text{i} c_{\mathcal J};\text{i} d_{\mathcal J}] \subset  (\text{i} a;\text{i} b)$.
The restriction on the indicator diagram   $[\text{i} c_{\varphi};\text{i} d_{\varphi}]$ implies that at least one of the values  $\delta_1 =c_{\varphi}-c_{\mathcal J}\ge 0,$ $\delta_2 =d_{\mathcal J}-d_{\varphi}\ge 0$
is strictly positive.
Suppose, e.g., that  $\delta_1>0$ and $\delta_2>0.$ Then, by Lemma \ref{lem-help}, we get   $\mathcal J(e^{i\delta' z}\varphi )\subset\mathcal J$
for any $\delta' \in [0;\delta_1)$ and $\mathcal J(e^{-i\delta'' z}\varphi),$ for any $\delta ''\in
 [0;\delta _2).$  
In particular, 
 \begin{equation}
 e^{i\delta ' z}\varphi , \ \ e^{-i\delta '' z}\varphi \in\mathcal J \quad \text{for any}\quad \delta'\in [0;\delta_1),\  \delta''\in [0;\delta_2). 
 \label{incl}
 \end{equation}
  
 Note that the relations
$$
\lim_{\delta'\to\delta_1}e^{i\delta ' z}\varphi =e^{i\delta_1 z}\varphi ,
\quad
\lim_{\delta''\to\delta_2}e^{i\delta '' z}\varphi =e^{i\delta_2 z}\varphi ,
$$ 
are valid in the topology of $\mathcal P(a;b). $
These relations, together with (\ref{incl}), lead to the inclusion  $\Phi \in\mathcal J,$
where $\Phi = (e^{i\delta_1 z}+e^{-i\delta_2 z})\varphi$.
  Further, any function $\Psi\in\mathcal J(\Phi)$ has the form 
 $$\Psi=\omega \Phi =\omega (e^{i\delta_1 z}+e^{-i\delta_2 z})\varphi ,$$ 
 where $\omega$ is an entire function of minimal exponential  type.
There is  no difficulty to verify that  $\omega\varphi\in\mathcal P (a;b)$, and  Lemma \ref{lem-help} implies the inclusions
 $$\omega\varphi\in\mathcal J,
\quad
e^{i\delta ' z} \omega\varphi \in\mathcal J, \ \ e^{-i\delta '' z}\omega\varphi \in\mathcal J 
 $$
for each $\delta'\in (0;\delta_1)$ and for each $  \delta''\in (0;\delta_2)$.
We can easily see that   
$$ e^{i\delta ' z} \omega\varphi+ e^{-i\delta '' z}\omega\varphi \to \Psi \quad\text{as}\quad \delta'\to\delta_1,\ \ \delta''\to\delta_2$$
in the topology of $\mathcal P(a;b),$
and, consequently,  $\Psi\in\mathcal J$.
 Finally, we obtain the inclusion $\mathcal J(\Phi)\subset \mathcal J.$

The indicator diagram of the function $\Phi$ is equal to the segment
$[\text{i} c_{\mathcal J}; \text{i} d_{\mathcal J}]$. By this observation and  the first part of the proof, we conclude that the submodule $\mathcal J$ is weakly localizable.

If  one of the values  $\delta_1$  or $\delta_2$ equals  zero we should argue by  the  similar way, with obvious  changes.
\medskip

3) It remains to consider the case when at least one  of the endpoints of the indicator segment $[ c_{\mathcal J}; d_\mathcal {J}]$ coincides with the corresponding endpoint of the interval $(a;b)$
(that is, $c_{\mathcal J}=a$ or  $d_{\mathcal J}=b$).

 Given any function $\Psi\in\mathcal P(a;b) $  with the indicator diagram $[\text{i} c_{\Psi};\text{i} d_{\Psi}]\subset [\text{i}c_{\mathcal J};\text{i}d_{\mathcal J}]$ 
and the zero set $\Lambda_{\Psi}\supset\Lambda_{\mathcal J}$, we should  prove  the inclusion $\Psi\in\mathcal J.$
Let  $[c';d']$ be a segment subjected the following restrictions:
\begin{equation}
[c';d']\subset (a;b)\bigcap [c_{\mathcal J};d_{\mathcal J}],\quad
[c_{\Psi};d_{\Psi}]\subset [c';d'],\quad [c_{\varphi};d_{\varphi}]\subset [c';d'].
\label{i-segm}
\end{equation}
We denote by $\mathcal J'$ a weakly localizable submodule 
with the zero set  $\Lambda_{\mathcal J'}=\Lambda_{\mathcal J}$ and the indicator segment 
$[c';d']$.
Then, 
$\tilde{\mathcal J} =\mathcal J\bigcap\mathcal J'$ is a stable submodule
with the zero set $\Lambda_{\tilde{\mathcal J}}
=\Lambda_{\mathcal J}$ and the indicator segment $[c';d'].$ 

The relations  (\ref{i-segm}) lead to the inclusion $\mathcal J (\varphi)\subset\tilde{\mathcal J}.$
Therefore, by the  previous parts of the proof, we get that  $\tilde{\mathcal J} =\mathcal J'.$ 
Taking into  account the relations (\ref{i-segm}) one more time, we conclude that
$\Psi\in\tilde{\mathcal J}\subset \mathcal J.$
\end{proof}

\begin{remark}
  In the first part of the    proof  we use the argument which is similar to the one has been used by
 I.F. Krasichkov-Ternovskii 
 \cite[\S 11, Theorem 11.1]{IF-synth-3}, \cite[\S 5, Proposition 5.5]{IF-loc-2}. 
\end{remark}

Given a function $\varphi\in\mathcal P(a;b),$ we, as usual, 
define the {\it principal} submodule generated by  $\varphi$
to be  the closure of the set $\{ p\varphi :\ p\in\mathbb C[z]\}$
 in  $\mathcal P(a;b). $
 We denote this submodule  by $\mathcal J_{\varphi}.$

\begin{theorem}
Let  $\mathcal J$ be a stable submodule. Each of the assumptions listed below is sufficient for the submodule $\mathcal J$ to be weakly localizable: 

\noindent
1) there exists a function
$\varphi\in\mathcal J$
 which generates the weakly localizable principal submodule $\mathcal J_{\varphi}$;

\noindent
2)  the inequality
\begin{equation}
2\rho_{\Lambda_{\mathcal J}}< d_{\mathcal J}-c_{\mathcal J}
\label{rad-compl}
\end{equation}
holds, where $d_{\mathcal J}-c_{\mathcal J}=+\infty $ if at least one of the values  $c_{\mathcal J}$ or  $d_{\mathcal J}$ is not finite.
\label{teorema2}
\end{theorem}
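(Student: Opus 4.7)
The strategy for both parts is to invoke Theorem \ref{teorema1}, that is, to exhibit a function $\varphi\in\mathcal J$ with $\mathcal J(\varphi)\subset\mathcal J$.

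For part 1), let $\varphi\in\mathcal J$ be such that $\mathcal J_{\varphi}$ is weakly localizable. Since $\mathcal J$ is closed and contains $\varphi$, it contains all polynomial multiples $p\varphi$ and hence their closure $\mathcal J_{\varphi}$. I would first verify that the invariants of the principal submodule coincide with those of its generator: $n_{\mathcal J_{\varphi}}=n_{\varphi}$ and $[c_{\mathcal J_{\varphi}};d_{\mathcal J_{\varphi}}]=[c_{\varphi};d_{\varphi}]$. The $\le$ directions are immediate from $\varphi\in\mathcal J_{\varphi}$; for the reverse, multiplication by a polynomial preserves both the zero divisor and the indicator diagram of $\varphi$, and neither invariant is enlarged by closure in the inductive-limit topology of $\mathcal P(a;b)$. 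Consequently every $\omega\varphi\in\mathcal J(\varphi)$ has $n_{\omega\varphi}\ge n_{\varphi}=n_{\mathcal J_{\varphi}}$ and indicator diagram equal to $[\text{i}c_{\mathcal J_{\varphi}};\text{i}d_{\mathcal J_{\varphi}}]$, so weak localizability of $\mathcal J_{\varphi}$ yields $\mathcal J(\varphi)\subset\mathcal J_{\varphi}\subset\mathcal J$, and Theorem \ref{teorema1} applies.

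For part 2), my plan is to construct a $\varphi\in\mathcal P(a;b)$ with $n_{\varphi}\ge n_{\mathcal J}$ whose indicator diagram sits strictly inside $(\text{i}c_{\mathcal J};\text{i}d_{\mathcal J})$; Lemma \ref{lem-help} will then deliver $\mathcal J(\varphi)\subset\mathcal J$ (noting that $\varphi\in\mathcal J(\varphi)$ because the constant $1$ qualifies as a multiplier in the definition of $\mathcal J(\varphi)$). Using $2\rho_{\Lambda_{\mathcal J}}<d_{\mathcal J}-c_{\mathcal J}$, I would choose a segment $[c';d']\subset (a;b)\cap (c_{\mathcal J};d_{\mathcal J})$ with $d'-c'>2\rho_{\Lambda_{\mathcal J}}$. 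By the very definition of the completeness radius, the system $\{t^k e^{-\text{i}\lambda_j t}\}_{(\lambda_j,m_j)\in\Lambda_{\mathcal J},\,k<m_j}$ fails to be complete in $C[c';d']$; Hahn--Banach furnishes a nonzero Borel measure $\mu$ supported on $[c';d']$ annihilating the whole system. The Fourier--Laplace transform $\varphi(z)=\int e^{-\text{i}tz}\,\mathrm d\mu(t)$ is entire, bounded on the real line by the total variation of $\mu$, and has indicator diagram contained in $[\text{i}c';\text{i}d']\subset (\text{i}a;\text{i}b)$, so it belongs to $\mathcal P(a;b)$; annihilation of each $t^k e^{-\text{i}\lambda_j t}$ is exactly the vanishing of $\varphi$ at $\lambda_j$ with multiplicity $\ge m_j$. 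Lemma \ref{lem-help} followed by Theorem \ref{teorema1} completes the argument.

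The main difficulty I anticipate lies in part 2), namely securing the segment $[c';d']$ strictly inside both $(a;b)$ and $(c_{\mathcal J};d_{\mathcal J})$ of length $>2\rho_{\Lambda_{\mathcal J}}$ in the edge cases where $c_{\mathcal J}$ or $d_{\mathcal J}$ coincides with an endpoint of $(a;b)$ or is infinite; the convention $d_{\mathcal J}-c_{\mathcal J}=+\infty$ stipulated in the statement is what makes this always possible. A smaller technical point is matching the growth bounds of $\varphi=\hat\mu$ to some Banach component $P_k$ of $\mathcal P(a;b)$, but this is a routine Fourier--Laplace estimate once $[c';d']\Subset (a;b)$ is fixed.
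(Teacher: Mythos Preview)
Your proposal is correct and follows essentially the same route as the paper: both parts reduce to Theorem~\ref{teorema1} via the inclusion $\mathcal J(\varphi)\subset\mathcal J$, with part~2) obtained through Lemma~\ref{lem-help} after producing a $\varphi$ whose zeros cover $\Lambda_{\mathcal J}$ and whose indicator diagram lies strictly inside $(\text{i}c_{\mathcal J};\text{i}d_{\mathcal J})$. The only cosmetic difference is that the paper dispatches the existence of such a $\varphi$ by a direct appeal to the definition of $\rho_{\Lambda}$ together with the Paley--Wiener--Schwartz theorem \cite[Theorem~7.3.1]{Horm}, whereas you spell out the Hahn--Banach/measure construction and the verification of the invariants of $\mathcal J_{\varphi}$; these are the same arguments made explicit.
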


\begin{proof}
1) Let $\varphi\in\mathcal J$ be a function generating the weakly localizable principal submodule $\mathcal J_{\varphi}.$ Then, $\mathcal J_{\varphi}=\mathcal J(\varphi)\subset\mathcal J,$ and  Theorem \ref{teorema1} implies  that the submodule $\mathcal J$ 
is weakly localizable.

2) Suppose that the inequality (\ref{rad-compl}) holds.
According to the definition of $\rho_{\Lambda}$ and  \cite[chapter 7, Theorem 7.3.1]{Horm},
there exists a function  
$\varphi \in\mathcal P(a;b)$ with the zero set  $\Lambda_{\varphi}\supset\Lambda_{\mathcal J}$ and the indicator diagram
$[\text{i} c_{\varphi};\text{i} d_{\varphi}]\subset (\text{i} c_{\mathcal J};\text{i}  d_{\mathcal J}).$
By Lemma \ref{lem-help} we get  the inclusion $\mathcal J(\varphi)\subset \mathcal J.$
Applying Theorem \ref{teorema1} we conclude that the submodule $\mathcal J$ is weakly localizable.

\end{proof}

\begin{corollary}
A stable submodule  $\mathcal J$ is ample (localizable) if and only if its indicator segment is equal to the segment $[a;b]\subset [-\infty;+\infty] .$
\label{sled1}
\end{corollary}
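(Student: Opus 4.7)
The plan is to reduce both implications to a direct application of Lemma \ref{lem-help} and Theorem \ref{teorema1}, together with a basic observation on where the endpoints of the indicator segment can lie. The starting point is the remark preceding Proposition \ref{prop1}: ``ample/localizable'' means ``weakly localizable and $[c_{\mathcal J};d_{\mathcal J}]\cap (a;b)=(a;b)$''. Combined with the fact that for any $\varphi\in\mathcal P(a;b)$ one has $[\mathrm{i}c_\varphi;\mathrm{i}d_\varphi]\subset (\mathrm{i}a;\mathrm{i}b)$, so that $c_\varphi>a$ and $d_\varphi<b$, passage to inf/sup in (\ref{ind-segm}) yields $a\le c_{\mathcal J}\le d_{\mathcal J}\le b$ for every nonzero submodule. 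Thus the inclusion $[c_{\mathcal J};d_{\mathcal J}]\subset [a;b]$ is automatic and only the endpoints are in question.

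For the necessity direction I would assume $\mathcal J$ is ample. Then by the definition above $[c_{\mathcal J};d_{\mathcal J}]\cap (a;b)=(a;b)$, which combined with $a\le c_{\mathcal J}$ and $d_{\mathcal J}\le b$ forces $c_{\mathcal J}=a$ and $d_{\mathcal J}=b$, so the indicator segment is exactly $[a;b]$. This half is essentially an unpacking of definitions and requires no real work.

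For the sufficiency direction I would assume $c_{\mathcal J}=a$ and $d_{\mathcal J}=b$ and pick any nonzero $\varphi\in\mathcal J$. Its indicator diagram satisfies $[\mathrm{i}c_\varphi;\mathrm{i}d_\varphi]\subset(\mathrm{i}a;\mathrm{i}b)=(\mathrm{i}c_{\mathcal J};\mathrm{i}d_{\mathcal J})$, and of course $n_\varphi(z)\ge n_{\mathcal J}(z)$ since $\varphi\in\mathcal J$. Hence Lemma \ref{lem-help} applies and gives $\mathcal J(\varphi)\subset \mathcal J$. Theorem \ref{teorema1} then yields that the stable submodule $\mathcal J$ is weakly localizable; combined with $[c_{\mathcal J};d_{\mathcal J}]\cap(a;b)=[a;b]\cap(a;b)=(a;b)$, this is exactly the condition for ampleness. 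One must be mindful of the case when $a=-\infty$ or $b=+\infty$: the inclusion $[\mathrm{i}c_\varphi;\mathrm{i}d_\varphi]\subset (\mathrm{i}a;\mathrm{i}b)$ still forces the finite endpoints $c_\varphi,d_\varphi$ to be strictly inside $(c_{\mathcal J};d_{\mathcal J})$, so Lemma \ref{lem-help} continues to apply.

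The only subtlety, and hence the main obstacle, is making sure that the endpoints of indicator diagrams of elements of $\mathcal P(a;b)$ are strictly inside the open interval $(a;b)$; this follows from the very construction of the spaces $P_k$ (where $[a_k;b_k]\Subset (a;b)$) and is exactly what makes Lemma \ref{lem-help} applicable to any $\varphi\in\mathcal J$ once the indicator segment of $\mathcal J$ reaches the full $[a;b]$.
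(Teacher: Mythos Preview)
Your argument is correct and matches the route the paper intends: the corollary is stated without proof immediately after Theorem~\ref{teorema2}, and the derivation the paper has in mind is exactly the one you give --- any $\varphi\in\mathcal J$ has indicator diagram strictly inside $(\mathrm{i}a;\mathrm{i}b)=(\mathrm{i}c_{\mathcal J};\mathrm{i}d_{\mathcal J})$, so Lemma~\ref{lem-help} yields $\mathcal J(\varphi)\subset\mathcal J$ and Theorem~\ref{teorema1} gives weak localizability, while the necessity direction is just unpacking the identification of ``ample'' with ``weakly localizable and $[c_{\mathcal J};d_{\mathcal J}]\cap(a;b)=(a;b)$''.
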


\begin{remark}
 {\rm To study the weak localizability of  stable submodules   by verifying  the assumption 1) of Theorem \ref{teorema2} is only reasonable   when
$2\rho_{\Lambda_{\mathcal J}}= d_{\mathcal J}-c_{\mathcal J}.$  
If so, we have $[c_{\mathcal J};d_{\mathcal J}]\subset (a;b)$, and the indicator diagram of any function $\psi\in \mathcal J$ is equal to $[\mathrm{i} c_{\mathcal J}; \mathrm{i} 
 d_{\mathcal J}]$.

A function $\varphi\in\mathcal P(-\infty ;+\infty)$
is said to be {\it invertible} (see \cite{Ber-Tayl}) if for any $\Phi\in \mathcal P(-\infty ;+\infty)$ the function  
$\Phi/\varphi$ belongs to   $ \mathcal P(-\infty ;+\infty )$ whenever it is  an entire one.

The assumption 1) of Theorem \ref{teorema2} is valid if the submodule $\mathcal J$ contains an invertible function or, more generally, a function $\varphi$ satisfying  the  relations 
\begin{equation}
\mathcal J_{\varphi}=\mathcal J(\varphi)=\{ p\varphi:\ \ p\in\mathbb C[z]\}.
\label{soot}
\end{equation}
Clearly, the relations (\ref{soot}) hold for an invertible function $\varphi$.
 \cite[Theorem 1]{NF-UMJ-2}  shows that they may also be valid when the function $\varphi $ is not  invertible.
 
Let  $\varphi\in\mathcal P(a;b)$ be such a function that the  submodule
 $\mathcal J(\varphi)$ contains at least one function $\Phi =\omega\varphi,$ where $\omega$ is not  a polynomial.
 Then, as it is proved in \cite[Theorem 2]{NF-UMJ-2},  the principal submodule $\mathcal J_{\varphi}$ may be weakly localizable only if its generator $\varphi$ is an element of the space $\mathcal F (C_0^{\infty} (a;b))$.
  In the other hand, it follows from   \cite[Theorem 1.2]{ABB} and \cite[Theorem 3]{NF-UMJ-2}, 
	the inclusion 
	 \begin{equation}
 \varphi\in\mathcal F (C_0^{\infty} (a;b))
 \label{necess}
 \end{equation} 
	is not a sufficient condition 
	for the principal submodule $\mathcal J_{\varphi}$
to be  weakly localizable. 
 
 To get any criteria of weak localizability for the principal submodule $\mathcal J_{\varphi}$ in terms of  characteristics
 of its generator  $\varphi$ seems to be quite difficult task. It can be reduced to the equivalent problem
 of weighted polynomial approximation. We are going to study it later in other place. Here we confine ourselves to considering  an example  of  weakly localizable principal submodule generated by a function $ \varphi\in\mathcal F (C_0^{\infty} (a;b))$.}
  \end{remark}

{\bf Example.}  
Suppose that $a<-\pi$ and $b>\pi$.
Then, the functions
\begin{eqnarray}
s(z)=\frac{\sin \pi z}{\pi z},\quad s_1(z)=s(\sqrt{z} ),\\
 \varphi (z)=\frac{s(z)}{s_1(z)s_1(-z)}
\label{def-fi}
\end{eqnarray}
belong to the module $\mathcal P(a;b)$.

We assert that {\it the principal submodule $\mathcal J_{\varphi}\subset\mathcal P(a;b)$ is weakly localizable.}

It is well-known that 
\begin{eqnarray}
|s(z)|\le \frac{c_0e^{\pi |\mathrm{Im}\, z|}}{\pi (1+ |z|)}, \quad z\in\mathbb C, 
\label{s-1}\\
|s (z)|\ge \frac{m_{d}e^{\pi|\mathrm{Im}\, z|}}{\pi |z|},\quad |z-k|\ge d, \quad k\in\mathbb Z,
\label{s-2}
\end{eqnarray}
where $c_0$ is an absolute constant, $d\in (0;1/2)$, and  $m_d$ is a positive constant depending on $d.$ 
The estimates (\ref{s-1}) and (\ref{s-2}) imply that the relations (\ref{soot}) are valid for the submodules $\mathcal J_s$ and $\mathcal J(s).$
By Theorem \ref{teorema2},  the desired assertion will follow from the inclusion
\begin{equation}
s\in\mathcal J_{\varphi}.
\label{sufi}
\end{equation}

To prove (\ref{sufi}) we should approximate the function $s$ in the space $\mathcal P(a;b )$ by functions of the form $p\varphi ,$ where $p$ is a polynomial.
Set $\omega (z) =s_1(z)s_1(-z)$.
 It follows from (\ref{s-1}) that for any real $x$ the inequalities
\begin{equation}
|\omega (x)|\le \frac{c_0^2 e^{\pi\sqrt{|x|}}}{\pi^2 (1+\sqrt{|x|})^2}\le c_1 e^{\pi\sqrt{|x|}} 
\label{om-above}
\end{equation}
hold, where
 $c_1=\frac{c_0^2}{\pi^2} .$

We are going to apply the following lemma (see \cite[Lemma 1]{NF-UMJ-2}) to get a proper  estimate for the function $\varphi$ on the real axis.
\begin{lemma}
Let  $d_0\in (0;1/2)$ be so small that $\left|\frac{\sin\pi\xi}{\pi\xi} -1\right|\le 1/2$ for any $\xi\in\mathbb C$ satisfying  $\pi|\xi|\le d_0.$
Then, there exists a positive  constant $c_{d_0}$ depending on $d_0$
such that the inequality
\begin{equation}
|s_1(z)|\ge
\frac{c_{d_0}e^{\pi\sqrt{|z|}|\sin(\theta /2)|}}{1+|z|},
\label{s-6}
\end{equation}
holds if $ z\in\mathbb C\setminus \bigcup\limits_{k\in\mathbb N}\left\{z:\, |z-k^2|< 3d_0\right\} .$
\label{lem0} 
\end{lemma}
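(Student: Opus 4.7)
My plan is to reduce the desired estimate to the known bound (\ref{s-2}) for $s$ by the substitution $w=\sqrt{z}$ with the principal branch, so that $\operatorname{Re} w\ge 0$. Writing $z=re^{\mathrm{i}\theta}$ gives $w=\sqrt{r}\,e^{\mathrm{i}\theta/2}$, hence $|w|=\sqrt{|z|}$ and $|\operatorname{Im} w|=\sqrt{|z|}\,|\sin(\theta/2)|$. Since $s$ is even, $|s_1(z)|=|s(w)|$ is independent of the branch choice, so this normalisation is harmless. The zeros of $s(w)$ are the nonzero integers; in the right half-plane only the positive integers $k\in\mathbb N$ can be close to $w$, because $|w-(-k)|\ge k\ge 1$ holds automatically.

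I would split the analysis into two cases. \textbf{Case A}: $|w-k|\le d_0/\pi$ for some $k\in\mathbb N$. Setting $\xi=w-k$, the identity $\sin\pi w=(-1)^k\sin\pi\xi$ together with the hypothesis on $d_0$ yields $|\sin\pi w|\ge \pi|w-k|/2$, so $|s(w)|\ge |w-k|/(2|w|)$. The factorisation $|z-k^2|=|w-k|\cdot|w+k|$ combined with $|z-k^2|\ge 3d_0$ gives $|w-k|\ge 3d_0/|w+k|$, and the elementary bounds $|w|\ge k-d_0/\pi\ge 1/2$ and $|w+k|\le 2|w|+d_0/\pi\le 3|w|$ produce $|s(w)|\ge c/|z|$ for some $c>0$ depending only on $d_0$. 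Since $|\operatorname{Im} w|\le d_0/\pi$ in this case, the exponential factor in the claim is bounded by $e^{d_0}$, so an adjustment of $c_{d_0}$ completes this case.

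\textbf{Case B}: $|w-k|>d_0/\pi$ for every $k\in\mathbb N$. Then automatically $|w-n|>d_0/\pi$ for every nonzero integer $n$. If additionally $|w|\ge d_0/\pi$, then (\ref{s-2}) applies with $d=d_0/\pi$ and yields
\[
|s(w)|\ge \frac{m_{d_0/\pi}\,e^{\pi\sqrt{|z|}|\sin(\theta/2)|}}{\pi\sqrt{|z|}},
\]
which gives the target bound because $2\sqrt{|z|}\le 1+|z|$. If instead $|w|<d_0/\pi$, then $\pi|w|<d_0$ and the hypothesis on $d_0$ directly yields $|s(w)|\ge 1/2$, which dominates $c_{d_0}e^{d_0}/(1+|z|)$ provided $c_{d_0}\le 1/(2e^{d_0})$. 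Taking $c_{d_0}$ to be the minimum of the three constants produced in the three subcases gives a uniform estimate.

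The main obstacle is Case A: the condition $|z-k^2|\ge 3d_0$ is uniform in the $z$-plane but translates to a lower bound on $|w-k|$ that decays like $1/k$, so (\ref{s-2}) is not directly applicable near the zeros. The resolution is that the first-order vanishing of $\sin\pi w$ at $w=k$ places the factor $|w-k|$ into the numerator of the estimate for $|s(w)|$, where it pairs with $1/|w+k|$ from the factorisation of $|z-k^2|$ to yield the $1/|z|$ decay that matches the claimed $1/(1+|z|)$ bound.
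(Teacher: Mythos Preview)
Your argument is correct. The paper does not actually prove this lemma; it is quoted verbatim from \cite[Lemma~1]{NF-UMJ-2} and used as a black box, so there is no in-paper proof to compare against. Your reduction via $w=\sqrt{z}$ to the standard estimate~(\ref{s-2}), together with the separate handling of the region $|w-k|\le d_0/\pi$ where (\ref{s-2}) is unavailable but the first-order vanishing of $\sin\pi w$ at $w=k$ supplies the missing factor $|w-k|\gtrsim d_0/|w+k|$ and hence the $1/|z|$ decay, is a clean and self-contained derivation; all the auxiliary inequalities you invoke ($|w|\ge k-d_0/\pi>1/2$, $|w+k|\le 3|w|$, $2\sqrt{|z|}\le 1+|z|$) check out.
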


From the relations (\ref{def-fi}), (\ref{s-1}), (\ref{s-6}), we derive the estimates for the function $\varphi:$ 
\begin{multline*}
|\varphi (z)|\le \frac{c_0 (1+|z|)}{\pi c_{d_0}^2 e^{\pi\sqrt{|z|}}}  \\ \text{if} \quad z\in\mathbb R
\setminus
\bigcup\limits_{k\in\mathbb N}\left(\left\{z:\, |z-k^2|< 3d_0\right\}\bigcup\left\{z:\, |z+k^2|< 3d_0\right\}\right),
\\
|\varphi (z)|\le \frac{c_0 e^{3\pi d_0}(1+|z|)}{c_{d_0}^2\pi e^{\pi\sqrt{|z| }(|\sin (\theta /2)|+|\cos (\theta /2)|)}}
\\ \text{if}\quad  z\in \bigcup\limits_{k\in\mathbb N}\left(\left\{z:\, |z-k^2|= 3d_0\right\}\bigcup\left\{z:\, |z+k^2|= 3d_0\right\}\right).
\end{multline*} 
Applying the maximum modulus principle,  we get 
\begin{equation}
|\varphi (x)|\le C_{1} (1+|x|)e^{-\pi\sqrt {|x|}}
\label{fi-ab-2}
\end{equation}
for any $x\in\mathbb R$, where $C_1$ depends only on $d_0\in (0;1/12).$

We set $W(x)=(1+|x|)e^{\pi\sqrt{|x|}}$ to be a weight function.
By the theorem due to de Branges (see  \cite[VI.H.1]{Koosis}) and the second theorem in the paragraph  VI.H.2 of the same book,
 we deduce that there exists a sequence of polynomials $p_m $ satisfying the relation 
 $$
 \|p_m-\omega\|_W
 =\sup\limits_{x\in\mathbb R}\frac{|p_m(x)-\omega (x)|}{W(x)}\to 0\quad \text{as}\quad  m\to+\infty .
 $$ 
Taking into account the inequality (\ref {fi-ab-2}), we obtain that
$$
|p_m(x)\varphi (x)|\le C (1+|x|)^2,\quad x\in \mathbb R, \quad m=1,2, \dots
$$ 
These estimates, together with the Phragmen-Lindel\"of principle and the topological properties  of the space
$\mathcal P(a;b)$, lead to the boundedness of the sequence $p_m\varphi$ in $\mathcal P(a;b)$.
By  \cite[Corollary 3]{Seb}, 
we conclude that there is a subsequence $p_{m_j}\varphi ,$ which converges to the function $s$ in the space $\mathcal P(a;b).$

\bigskip

In the following theorem we obtain sufficient conditions of  weak localizability for a submodule  $\mathcal J\subset \mathcal P(a;b)$ without a priori requirement  of its stability.

\begin{theorem}
Let $\mathcal J\subset \mathcal P(a;b)$ 
be a closed subspace.
Assume that the intervals $(a;b)$,  $(c_{\mathcal J};d_{\mathcal J})$ are unbounded,
and for any $\varphi\in\mathcal J$ the implication
\begin{equation}
e^{\text{i}\, hz}\varphi\in\mathcal P(a;b),
\ \ h\in\mathbb R \Longrightarrow
e^{\text{i}\, hz}\varphi\in\mathcal J 
\label{transs}
\end{equation}
is true.
Then, $\mathcal J$ is a weakly localizable submodule.
\label{teorema3}
\end{theorem}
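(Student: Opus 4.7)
The plan is to verify the two hypotheses of Theorem~\ref{teorema1}: that $\mathcal J$ is a stable submodule, and that it contains some $\varphi$ with $\mathcal J(\varphi)\subset\mathcal J$. The argument proceeds in three stages, of which the second is the main obstacle.

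\textit{Submodule structure.} For each $\varphi\in\mathcal J$ the indicator diagram $[\mathrm{i}c_\varphi;\mathrm{i}d_\varphi]$ lies strictly inside $(\mathrm{i}a;\mathrm{i}b)$, so the shifts $e^{\mathrm{i}hz}\varphi$ remain in $\mathcal P(a;b)$ for all real $h$ in a neighborhood of $0$; the hypothesis (\ref{transs}) then places them in $\mathcal J$. A Taylor estimate in each Banach norm $\|\cdot\|_k$ (controlling $|e^{\mathrm{i}hz}-1-\mathrm{i}hz|$ by $\tfrac12|hz|^2 e^{|h|\,|\mathrm{Im}\,z|}$) shows that $(e^{\mathrm{i}hz}-1)\varphi/h\to\mathrm{i}z\varphi$ in $\mathcal P(a;b)$ as $h\to 0$, so closedness of $\mathcal J$ forces $z\varphi\in\mathcal J$.

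\textit{Stability.} Via the duality principle (Proposition~\ref{DP}), $\mathcal J$ corresponds to a $D$-invariant subspace $W=\mathcal F^{-1}(\mathcal J)^\circ\subset\mathcal E(a;b)$, and the hypothesis~(\ref{transs}) translates into invariance of $W$ under the admissible translations $f(t)\mapsto f(t-h)$. Since $(a;b)$ is unbounded, a Schwartz-type argument on mean-periodic functions shows that the exponential monomials in $W$ form a total set, whence $\sigma(D|_W)$ is discrete; Proposition~\ref{prop2} then yields stability of $\mathcal J$. This step is the chief technical difficulty: the argument is transparent when $(a;b)=\mathbb R$, but genuine half-lines admit only one-sided translations and require a more careful version of Schwartz's classical argument. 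An alternative is to work entirely on the $\mathcal P(a;b)$-side, exploiting convolutions $\varphi\mapsto\int e^{\mathrm{i}hz}\varphi(z)\,d\mu(h)\in\mathcal J$ to enlarge the class of multipliers preserving $\mathcal J$ and then verify the stability condition at each $\lambda\in\mathbb C$ by hand.

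\textit{Generator with interior indicator diagram.} The unboundedness of $(c_\mathcal J;d_\mathcal J)$ provides a suitable generator. Suppose for definiteness $d_\mathcal J=+\infty$, so $b=+\infty$ (the case $c_\mathcal J=-\infty$ is symmetric). For any $\psi\in\mathcal J$, set $\varphi=\psi$ if $c_\psi>c_\mathcal J$; otherwise $c_\psi=c_\mathcal J$, and set $\varphi=e^{-\mathrm{i}\varepsilon z}\psi$ for a small $\varepsilon>0$, which lies in $\mathcal J$ by (\ref{transs}) since $b=+\infty$ imposes no restriction from above. In either case the indicator diagram of $\varphi$ sits strictly inside $(\mathrm{i}c_\mathcal J;\mathrm{i}d_\mathcal J)$, Lemma~\ref{lem-help} gives $\mathcal J(\varphi)\subset\mathcal J$, and Theorem~\ref{teorema1} concludes that $\mathcal J$ is weakly localizable.
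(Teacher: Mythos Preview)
Your overall plan matches the paper's: establish that $\mathcal J$ is a submodule, prove stability, and then invoke the machinery of Section~3. Step~1 is identical to the paper's. Step~3 is essentially Corollary~\ref{sled1} unpacked; note moreover that the translation hypothesis already forces $c_{\mathcal J}=a$ and $d_{\mathcal J}=b$ (given any $\varphi\in\mathcal J$, the shifts $e^{\mathrm{i}hz}\varphi$ stay in $\mathcal J$ for all admissible $h$, pushing $c_\varphi$ down to $a$ and $d_\varphi$ up to $b$), so every $\varphi\in\mathcal J$ automatically has indicator diagram strictly inside $(\mathrm{i}c_{\mathcal J};\mathrm{i}d_{\mathcal J})$ and your ``otherwise'' branch never occurs.

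The genuine gap is in your stability step. You yourself flag it as the chief difficulty and offer two sketches without executing either. The Schwartz-type route---passing to $W$ and arguing that the exponential monomials are total---is not available on a half-line, and even on $\mathbb R$ it would prove more than is needed here. Your ``alternative'' via convolution multipliers points in the right direction, but you stop short of the decisive identity.

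The paper proves stability directly and briefly on the $\mathcal P(a;b)$ side. Using the known fact that stability at one point implies stability everywhere, fix $\lambda_0\notin\Lambda_{\mathcal J}$ and choose $\varphi_0\in\mathcal J$ with $\varphi_0(\lambda_0)=1$. For $\psi\in\mathcal J$ vanishing at $\lambda_0$, write
\[
\frac{\psi}{z-\lambda_0}\;=\;\frac{\psi}{z-\lambda_0}\,\varphi_0\;-\;\frac{\varphi_0-1}{z-\lambda_0}\,\psi.
\]
On the distribution side each summand pairs with $f\in W$ through an expression of the form $(T,\,S*f)$, where $S=\mathcal F^{-1}(\varphi_0)$ or $S=\mathcal F^{-1}(\psi)$ belongs to $W^0$. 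Translation invariance of $W^0$ (which is exactly hypothesis~(\ref{transs})) makes $(S*f)(\tau)=(S_\tau,f)$ vanish identically, so both summands annihilate $W$ and hence lie in $\mathcal J$. This algebraic decomposition is the missing ingredient in your proposal; once you have it, stability is immediate and Corollary~\ref{sled1} finishes the proof.
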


\begin{proof}
Without loss of  generality, we suppose that $$a=0,\ \  b=d_{\mathcal J}=+\infty .$$

Let us notice that
$$
 \frac{e^{\text{i}\, \tau z}\varphi-\varphi }{\tau} \to z\varphi\quad \text{as}\quad\tau\to 0
$$
in the topology of $\mathcal P(a;b).$ 
Together with the implication (\ref{transs}), it leads  to the conclusion  that $\mathcal J$ is a submodule.
By Corollary \ref{sled1},  we need only to prove the stability property for this submodule .

It is known that the stability of a (closed) submodule at any point $\lambda\in\mathbb C$ follows from its stability at one point $\lambda_0$ (see \cite[Proposition 4.2, remark 1, $\S 4$]{IF-loc-2}).
Let us fix a point $\lambda_0\in\mathbb C\setminus\Lambda_{\mathcal J}  $ 
and show that $\frac{\psi}{z-\lambda_0}\in\mathcal J$ for any function $\psi\in\mathcal J$ vanishing at the point $\lambda_0$. 
According to the definition of the zero set $\Lambda_{\mathcal J}$, we can find a function $\varphi_0\in\mathcal J $
with the property $\varphi_0(\lambda_0)=1.$

Let $S=\mathcal F^{-1} \left(\psi\right),$ $S_{\lambda_0}
=\mathcal F^{-1} \left(\frac{\psi}{z-\lambda_0}\right),$ $S_0=\mathcal F^{-1} \left(\varphi_0\right),$
 $\tilde{S}_{0}=\mathcal F^{-1} \left(\frac{\varphi_0-1}{z-\lambda_0}\right)$. 

Then, for the distributions 
$$
S_1=\mathcal F^{-1} \left(\frac{\varphi_0-1}{z-\lambda_0}\psi\right),
\quad
S_2 =\mathcal F^{-1} \left(\frac{\psi}{z-\lambda_0}\varphi_0\right)
$$
 we have 
   \begin{equation}
   (S_1,f)=(\tilde{S}_0,S*f),\quad
   (S_2,g)=(S_{\lambda_0},S_0*f),
   \label{conv-form}
   \end{equation}
   where $(S*f)(\tau)=(S, f(t+\tau ))$, $f\in \mathcal E (0;+\infty ).$

Let $W\subset \mathcal E (0;+\infty )$ be  the subspace which  annihilator submodule is $\mathcal J .$
   Then,  (\ref{transs}) and (\ref{conv-form})   imply that
the relations
$$
(S_1,f)=0,\ \ (S_2, f)=0,\quad \text{for any } \quad f\in W.
$$
In what follows, $S_1,$ $S_2 \in W^0$ or, equivalently,
$$
\frac{\varphi_0-1}{z-\lambda_0}\psi\in\mathcal J,\quad 
\frac{\psi}{z-\lambda_0}\varphi_0\in\mathcal J.
$$
From the last relations we conclude that
$$
\frac{\psi}{z-\lambda_0}
=\frac{\psi}{z-\lambda_0}\varphi_0
-\frac{\varphi_0-1}{z-\lambda_0}\varphi_0\in\mathcal J.
$$

\end{proof}

\medskip

\section{Dual assertions on $D$-invariant subspaces}

In this section we  obtain the assertions on closed subspaces
$W\subset \mathcal E (a;b),$ which are equivalent  to the theorems proved in the  previous one.

 From here and thereon, unless otherwise specified,
we denote by $W$ a closed $D$-invariant subspace of $\mathcal E(a;b)$ with discrete spectrum $\sigma_W=-\text{i}\, \Lambda,$ where
$\Lambda=\{ (\lambda_j;m_j)\}.$
 
\begin{theorem}  For the subspace  $W$  the representation
\begin{equation}
W=\overline{W_{I_W}+\mathcal L (\text{Exp}\, W)} 
\label{s-W}
\end{equation}
holds (in the other words, $W$ admits weak spectral synthesis) if and only if
its annihilator submodule $\mathcal J$ contains a function $\varphi$ with the property
$$
\mathcal J(\varphi )\subset\mathcal J.
$$
\label{teorema7}
\end{theorem}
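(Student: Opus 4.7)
The plan is to recognize that Theorem \ref{teorema7} is essentially a dictionary translation of Theorem \ref{teorema1} via the duality machinery developed in Section 2, combined with the standing hypothesis on $W$ in Section 4.

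First I would invoke the running assumption of Section 4, namely that $W$ has discrete spectrum $\sigma_W = -\text{i}\,\Lambda$. By Proposition \ref{prop2}, this is equivalent to the stability of the annihilator submodule $\mathcal{J} = \mathcal{F}(W^0)$. So $\mathcal{J}$ is automatically a stable submodule, and Theorem \ref{teorema1} is applicable to it.

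Next I would apply Proposition \ref{prop1}, which says that $W$ admits weak spectral synthesis, i.e.\ $W = \overline{W_{I_W} + \mathcal{L}(\text{Exp}\, W)}$, if and only if $\mathcal{J}$ is weakly localizable. Combining this with Theorem \ref{teorema1}, applied to the stable submodule $\mathcal{J}$, I would conclude that $W$ admits weak spectral synthesis if and only if there exists $\varphi \in \mathcal{J}$ with $\mathcal{J}(\varphi) \subset \mathcal{J}$, which is exactly the required statement.

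There is no genuine obstacle in this proof: the hard analytic work has already been absorbed into Theorem \ref{teorema1} (the use of the Bornological Version of the Individual Theorem together with Lemma \ref{lem-help}) and the structural translation into Propositions \ref{prop1} and \ref{prop2}. The only thing to be careful about is to state explicitly that the hypothesis of discrete spectrum, taken from the preamble of Section 4, is what allows Theorem \ref{teorema1} (whose hypothesis is stability) to be invoked. So the proof should be quite short—effectively a one-paragraph reduction citing Propositions \ref{prop1}, \ref{prop2} and Theorem \ref{teorema1}.
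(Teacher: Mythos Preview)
Your proposal is correct and matches the paper's own proof essentially line for line: the paper also derives stability of $\mathcal J$ from the discrete-spectrum hypothesis (citing \cite[Proposition 3.1]{Al-Kor}, which is one half of Proposition \ref{prop2}), then invokes Proposition \ref{prop1} and Theorem \ref{teorema1}. There is nothing to add.
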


\begin{proof}
By \cite[Proposition 3.1]{Al-Kor}, the annihilator submodule $\mathcal J=\mathcal F(W^0)$ is stable.
Proposition \ref{prop1} means that the representation (\ref{s-W}) holds if and only if the submodule  $\mathcal J$
is weakly localizable. In what follows, Theorem \ref{teorema1} leads to the required assertion.
\end{proof}

The following theorem is dual  to Theorem \ref{teorema2}.

\begin{theorem}
Let the subspace $W$  satisfy one of the  assumptions listed below. 

1) The annihilator submodule $\mathcal J=\mathcal F(W^0)$ contains a function $\varphi$ generating weakly localizable principal submodule $\mathcal J_{\varphi}.$

Or,

2) the completeness radius  $\rho_{\Lambda}$ is less than a half of the length of the interval $I_W. $

Then    $W$ admits weak spectral synthesis.   
\label{teorema8}
\end{theorem}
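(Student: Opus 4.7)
The approach is a pure duality reduction: pass to the annihilator submodule $\mathcal J=\mathcal F(W^0)\subset\mathcal P(a;b)$ and invoke Theorem \ref{teorema2}. Indeed, Theorem \ref{teorema8} is designed as the direct $W$-side translation of Theorem \ref{teorema2}, so essentially no new technical work is required; one only has to move quantities through the dictionary set up in Section 2.

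First, since $\sigma(W)$ is discrete by the standing hypothesis on $W$, Proposition \ref{prop2} guarantees that $\mathcal J$ is stable. Next, by Proposition \ref{prop1}, proving that $W$ admits weak spectral synthesis amounts exactly to proving that $\mathcal J$ is weakly localizable. Hence it suffices to check, under either assumption 1) or 2), that the corresponding condition in Theorem \ref{teorema2} holds for $\mathcal J$.

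Under assumption 1) there is nothing to do: the statement that $\mathcal J$ contains a function $\varphi$ generating a weakly localizable principal submodule $\mathcal J_\varphi$ is literally condition 1) of Theorem \ref{teorema2}, so $\mathcal J$ is weakly localizable. Under assumption 2) I invoke Proposition \ref{DP}, which identifies $\Lambda_{\mathcal J}$ with $\Lambda=\mathrm{i}\,\sigma(W)$ (so that $\rho_{\Lambda_{\mathcal J}}=\rho_\Lambda$) and tells us that the endpoints of $I_W$ are $c_{\mathcal J}$ and $d_{\mathcal J}$; consequently $|I_W|=d_{\mathcal J}-c_{\mathcal J}$ (with the natural convention when one endpoint is infinite). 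The hypothesis $2\rho_\Lambda<|I_W|$ therefore reads $2\rho_{\Lambda_{\mathcal J}}<d_{\mathcal J}-c_{\mathcal J}$, which is precisely condition 2) of Theorem \ref{teorema2}; hence $\mathcal J$ is again weakly localizable.

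There is no genuine obstacle, since all the hard work has already been absorbed into Theorem \ref{teorema2} (and, ultimately, into Theorem \ref{teorema1} and the Bornological Version of the Individual Theorem). The only point requiring care is the clean translation of the geometric invariants of $W$ (namely $\sigma(W)$ and $|I_W|$) into the corresponding invariants of $\mathcal J$ (namely $\Lambda_{\mathcal J}$ and $d_{\mathcal J}-c_{\mathcal J}$), but this is handled uniformly by Proposition \ref{DP}.
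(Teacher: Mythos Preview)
Your proof is correct and follows exactly the approach intended by the paper: the paper presents Theorem \ref{teorema8} simply as ``dual to Theorem \ref{teorema2}'' without a separate proof, and your argument spells out precisely this duality reduction via Propositions \ref{DP}, \ref{prop1}, and \ref{prop2}. The translation of $\rho_\Lambda$ and $|I_W|$ into $\rho_{\Lambda_{\mathcal J}}$ and $d_{\mathcal J}-c_{\mathcal J}$ is the only thing to check, and you handle it correctly.
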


\begin{corollary}
The subspace $W$  admits classical spectral synthesis (\ref{f2}) if and only if
 $I_W=( a;b).$
\label{cor-d-2}
\end{corollary}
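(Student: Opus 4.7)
My plan is to translate the statement entirely into the language of the annihilator submodule $\mathcal J = \mathcal F(W^0)$ and reduce to Corollary \ref{sled1}. Since $W$ has discrete spectrum by the standing hypothesis of Section 4, Proposition \ref{prop2} already tells me $\mathcal J$ is stable. The Duality Principle (Proposition \ref{DP}) identifies the endpoints of $I_W$ with $c_{\mathcal J}$ and $d_{\mathcal J}$; because $I_W$ is relatively closed in $(a;b)$, the condition $I_W=(a;b)$ is equivalent to $c_{\mathcal J}\le a$ and $d_{\mathcal J}\ge b$, that is, to the indicator segment of $\mathcal J$ coinciding with $[a;b]\subset[-\infty;+\infty]$. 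So the geometric condition on $I_W$ is just the hypothesis of Corollary \ref{sled1}.

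For the ($\Leftarrow$) direction I assume $I_W=(a;b)$. By the observation above and Corollary \ref{sled1}, the stable submodule $\mathcal J$ is ample (localizable); in particular it is weakly localizable, so by Proposition \ref{prop1} (or equivalently Theorem \ref{teorema7}) the subspace $W$ admits weak spectral synthesis, i.e.\ $W=\overline{W_{I_W}+\mathcal L(\mathrm{Exp}\,W)}$. The residual subspace is trivial in this case, since $W_{(a;b)}$ consists of smooth functions vanishing to all orders at every point of $(a;b)$, and any such function is identically zero. Thus $W=\overline{\mathcal L(\mathrm{Exp}\,W)}$, which is classical spectral synthesis (\ref{f2}).

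For the ($\Rightarrow$) direction I assume $W=\overline{\mathcal L(\mathrm{Exp}\,W)}$ and take annihilators. A distribution $S\in\mathcal E'(a;b)$ annihilates every exponential monomial $t^je^{-\mathrm{i}\lambda t}$ lying in $\mathrm{Exp}\,W$ iff $\mathcal F(S)$ vanishes, with the prescribed multiplicities, on $\Lambda_{\mathcal J}$. Therefore $W=\overline{\mathcal L(\mathrm{Exp}\,W)}$ translates, via the Fourier–Laplace isomorphism, into
\[
\mathcal J=\{\varphi\in\mathcal P(a;b):n_\varphi(\lambda)\ge n_{\mathcal J}(\lambda)\text{ for all }\lambda\in\mathbb C\},
\]
which is precisely the statement that $\mathcal J$ is ample (localizable). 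Applying Corollary \ref{sled1} in the reverse direction, the indicator segment of $\mathcal J$ must equal $[a;b]$, and the first paragraph then yields $I_W=(a;b)$.

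The only non-cosmetic step is the dual identification of classical spectral synthesis with ampleness of $\mathcal J$; this is essentially the classical duality principle pointed out in the paper just after Proposition \ref{prop1}, so nothing beyond bookkeeping with annihilators and Fourier–Laplace transforms is required. The triviality of $W_{(a;b)}$ is elementary, and Corollary \ref{sled1} supplies the geometric content. I expect no real obstacle; the proof is essentially a one-line combination of Proposition \ref{prop2}, the Duality Principle, and Corollary \ref{sled1}, once the correspondence between (\ref{f2}) and ampleness is spelled out.
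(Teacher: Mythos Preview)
Your proof is correct and is exactly the dual argument the paper intends: Corollary \ref{cor-d-2} is positioned as the Section~4 counterpart of Corollary \ref{sled1} (just as Theorem \ref{teorema8} is dual to Theorem \ref{teorema2}), and the paper gives no separate proof. Your reduction via Propositions \ref{DP}, \ref{prop1}, \ref{prop2} and the triviality of $W_{(a;b)}$ is precisely the intended bookkeeping, and your identification of classical synthesis with ampleness of $\mathcal J$ is the ``classical duality principle'' alluded to right after Proposition \ref{prop1}.
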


For 
any $A,\ B \subset\mathbb R$ we denote by $A\div B$ their {\it geometric difference},
which is equal to the set of all $x\in\mathbb R $ such that  $x+B\subset A.$
Let $S\in \mathcal E'(a;b)$ and $h\in (a;b)\div\text{ch}\, \text{supp}\, S$
(by $\text{ch}\, \text{supp}\, S$ we denote the convex hall of  $\text{supp}\, S$).
We define {\it $h$-translation } $S_h$ 
 setting $(S_h,f)=(S, f(t+h)) $ for any $f\in \mathcal E (a;b).$ 

\begin{theorem}
Let $W\subset \mathcal E(a;b)$ be  an  {\sl arbitrary} closed subspace, and let both  intervals $(a;b)$, $I_W$ 
be unbounded.
Assume that the annihilator subspace $W^0$ is invariant with respect to the translation operator, that is,
the inclusion $S\in W^0$
implies that $ S_h\in W^0$  for any $h\in  (a;b)\div\mathrm{ch}\, \mathrm{supp}\, S.$

Then, the subspace $W$ is  $D$-invariant and admits weak spectral synthesis.
\label{teorema9}
\end{theorem}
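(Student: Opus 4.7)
The plan is to reduce the statement to Theorem \ref{teorema3} via the duality scheme and then conclude with Proposition \ref{prop1}. First I would interpret the translation-invariance hypothesis on the Fourier-Laplace side. For $S\in\mathcal{E}'(a;b)$ with image $\varphi=\mathcal{F}(S)$, a direct computation gives
$$
\mathcal{F}(S_h)(z) \;=\; (S,\,e^{-\text{i}(t+h)z}) \;=\; e^{-\text{i}hz}\varphi(z),
$$
so the $h$-translation operator on $\mathcal{E}'(a;b)$ corresponds to multiplication by the exponential $e^{-\text{i}hz}$. By the Paley-Wiener-Schwartz description of $\mathcal{P}(a;b)$, or equivalently by tracking the norms $\|\cdot\|_k$, the condition $h\in(a;b)\div\mathrm{ch}\,\mathrm{supp}\,S$ is precisely equivalent to $e^{-\text{i}hz}\varphi\in\mathcal{P}(a;b)$: multiplication by $e^{-\text{i}hz}$ shifts the indicator diagram of $\varphi$ along the imaginary axis by $h$, and admissibility of $h$ is exactly what keeps the shifted diagram inside $(\text{i}a;\text{i}b)$. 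Hence the translation invariance of $W^0$ translates into condition (\ref{transs}) for the closed subspace $\mathcal{J}:=\mathcal{F}(W^0)\subset\mathcal{P}(a;b)$.

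Next I would verify the remaining hypothesis of Theorem \ref{teorema3}, namely unboundedness of $(c_{\mathcal{J}};d_{\mathcal{J}})$. Using the limit $\tau^{-1}(e^{\text{i}\tau z}\varphi-\varphi)\to z\varphi$ in $\mathcal{P}(a;b)$ (already the first step in the proof of Theorem \ref{teorema3}) together with closedness of $\mathcal{J}$, I obtain that $\mathcal{J}$ is closed under multiplication by $z$, i.e.\ is a submodule. By the duality principle this makes $W$ itself $D$-invariant, so $I_W$ is well-defined in the sense of the paper, and Proposition \ref{DP} identifies its endpoints with $c_{\mathcal{J}}$ and $d_{\mathcal{J}}$. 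Since $I_W$ is assumed unbounded, so is $(c_{\mathcal{J}};d_{\mathcal{J}})$.

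With both hypotheses of Theorem \ref{teorema3} established, that theorem declares $\mathcal{J}$ weakly localizable. Proposition \ref{prop1} then converts this back into the desired conclusion that $W$ admits weak spectral synthesis.

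The main obstacle is the bookkeeping in the first step: one must verify that the Fourier-Laplace transform matches the admissible translations $S\mapsto S_h$ of $\mathcal{E}'(a;b)$ with exactly those exponential multiplications $\varphi\mapsto e^{-\text{i}hz}\varphi$ that preserve the inductive-limit space $\mathcal{P}(a;b)$, with correct control of both the exponential type and the polynomial-growth index on the real axis. Once this correspondence is pinned down, both the $D$-invariance of $W$ and the weak localizability of $\mathcal{J}$ are immediate consequences of the previously established results.
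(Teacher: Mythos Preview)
Your proposal is correct and follows essentially the same route as the paper's own proof: translate the translation-invariance of $W^0$ via Fourier--Laplace into condition~(\ref{transs}) for $\mathcal{J}=\mathcal{F}(W^0)$, deduce $D$-invariance of $W$ (hence the identification of the endpoints of $I_W$ with $c_{\mathcal{J}},d_{\mathcal{J}}$ via Proposition~\ref{DP}), apply Theorem~\ref{teorema3}, and finish with Proposition~\ref{prop1}. The paper's proof is simply a terser version of yours; your explicit computation $\mathcal{F}(S_h)=e^{-\text{i}hz}\varphi$ and the matching of admissible $h$ with membership of $e^{-\text{i}hz}\varphi$ in $\mathcal{P}(a;b)$ fill in exactly the details the paper leaves implicit.
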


\begin{proof}
The assumption on the annihilator subspace $W^0$ implies that the subspace $W$
is $D$-invariant, and the annihilator submodule $\mathcal J=\mathcal F(W^0)$ satisfies the implication (\ref{transs}). By Theorem \ref{teorema3}, this submodule is weakly localizable.  
Therefore, the subspace $W$
admits weak spectral synthesis (Proposition \ref{prop1}).   
 \end{proof}

 \bigskip

\bigskip

Natalia Fairbakhovna Abuzyarova,
 
Baskir State University,

 Zaki Validi str., 32

 450074, Ufa, Russia

abnatf@gmail.com

\end{document}